\begin{document}

\newcommand{\Sp}{\mathcal{S}}
\newcommand{\C}{\mathcal{C}}
\newcommand{\R}{\mathcal{R}}

\newtheorem{thm}{Theorem}
\newtheorem{pro}[thm]{Proposition}
\newtheorem{lem}[thm]{Lemma}
\newtheorem{cor}[thm]{Corollary}

\theoremstyle{definition}
\newtheorem{dfn}[thm]{Definition}
\newtheorem{exa}[thm]{Example}
\newtheorem{rem}[thm]{Remark}

\newcommand{\tr}{\intercal}

\title{A two-strain model of infectious disease spread with asymmetric temporary immunity periods and partial cross-immunity}

\author{%
  Matthew D. Johnston$^{*}$, 
  Bruce Pell, 
  and 
  David A. Rubel \\ \\ Department of Mathematics + Computer Science\\Lawrence Technological University, 21000 W 10 Mile Rd, Southfield, MI 48075, USA\\ *Corresponding author: \tt{mjohnsto1@ltu.edu}
}

\date{}

\maketitle

\begin{abstract}
We introduce a two-strain model with asymmetric temporary immunity periods and partial cross-immunity. We derive explicit conditions for competitive exclusion and coexistence of the strains depending on the strain-specific basic reproduction numbers, temporary immunity periods, and degree of cross-immunity. The results of our bifurcation analysis suggest that, even when two strains share similar basic reproduction numbers and other epidemiological parameters, a disparity in temporary immunity periods and partial or complete cross-immunity can provide a significant competitive advantage. To analyze the dynamics, we introduce a quasi-steady state reduced model which assumes the original strain remains at its endemic steady state. We completely analyze the resulting reduced planar hybrid switching system using linear stability analysis, planar phase-plane analysis, and the Bendixson-Dulac criterion. We validate both the full and reduced models with COVID-19 incidence data, focusing on the Delta (B.1.617.2), Omicron (B.1.1.529), and Kraken (XBB.1.5) variants. These numerical studies suggest that, while early novel strains of COVID-19 had a tendency toward dramatic takeovers and extinction of ancestral strains, more recent strains have the capacity for co-existence. 
\end{abstract}

\section{Introduction}


Since the start of the COVID-19 pandemic, compartmental differential equation-based mathematical models have played an instrumental role in informing public health decisions. Early on, modeling efforts focused on forecasting and evaluating the efficacy of non-pharmaceutical interventions such as lockdowns \cite{baba2021,sardar2020}, school closures \cite{wang2020,wu2022}, contact tracing \cite{biala2022,bilinski2020}, and face mask utilization \cite{eikenberry2020,ngonghala2020}. Following the rapid development of effective vaccines and antiviral treatments, the emphasis shifted to modeling pharmaceutical measures, such as estimating the effects of different vaccine distribution strategies \cite{foy2021,liu2022}, evaluating vaccine efficacy \cite{johnston2022,lin2022}, and assessing waning vaccine immunity \cite{pell2023}. Despite the unprecedented global effort to eradicate COVID-19, however, the disease has continued to spread widely and is now widely considered to be endemic in the global population.

A significant factor in this continuing spread has been the emergence of variants, such as Delta (B.1.617.2), Omicron (B.1.1.529), and the Omicron subvariant Kraken (XBB.1.5), which were reported as the most transmissible strains at the time. Transmissibility, however, is only one of several factors, including vaccine-resistance, diagnostic evasion, and cross-immunity, which cumulatively determine whether emerging strains exhibit \emph{competitive exclusion} or \emph{coexistence} with the circulating ancestral strains. Competitive exclusion, where one strain drives the others to extinction, has been observed with the Delta and Omicron waves during the COVID-19 pandemic, where earlier strains were largely eradicated over the course of several months. Indeed, such dramatic takeovers are expected during the beginning of a pandemic by a novel virus as it is rapidly mutating and exploring its evolutionary fitness. As COVID-19 transitions from a pandemic to endemic infection, however, it is anticipated that we will see multiple co-existing strains circulating in a given year, akin to seasonal influenza \cite{fudolig2020}.


Mathematically, it has been common to account for multiple strains in an SIR-type (Susceptible-Infectious-Removed) model of infectious disease spread \cite{Kermack1927} by dividing the infectious class between several competing and non-overlapping strains. The roots of this research can be found in several areas, including the study of viruses such as influenza, bacterial infections, and parasites \cite{andreasen1997,hethcote2000,lin1999,white1998}. For many basic SIR-type models extended to multiple strains, the only possibility is competitive exclusion \cite{andreasen1997,ciupeanu2022,wang2022,wang2022-2}. However, there are several mechanisms known to facilitate coexistence, including co-infection, cross-immunity, seasonal variations, and age-stratification of infectious \cite{martcheva2015}. In the context of COVID-19, several studies have demonstrated the capacity of coexistence in certain cases, such as utilizing different force of infection terms \cite{khyar2020,wang2022-3}, and strain-specific vaccination \cite{fudolig2020}, and cross-immunity \cite{pell2023}. Understanding the capacity of a multistrain model to exhibit competitive exclusion or coexistence remains challenging, however, due to the high-dimensionality of the associated models.






In this paper, we introduce a two-strain model which incorporates asymmetric temporary immunity periods and partial cross-immunity. We determine explicit conditions for competitive exclusion and coexistence of the two strains based on the basic reproduction numbers, the temporary immunity period durations, and the degree of cross-immunity. To address the challenges of analyzing the dynamics of multistrain models, we further introduce a dimension-reducing modeling framework for analyzing the emergence of new strains of a virus. In our method, we assume that the original strain is endemic in the population and remains at steady state throughout the evolution of the new strain. This allows the dynamics of the emerging strain to be reduced while closely capturing the dynamics and long-term behavior of the full model. The method is based on the quasi-steady state approximation (QSSA) which is extensively used in the study of biochemical reaction networks to justify the Michaelis-Menten \cite{ingalls2013,michaelis1913} and Hill \cite{hill1910} rate forms. More broadly in mathematical biology, the use of the QSSA is more limited but includes application to a two-strain tuberculosis model \cite{castillo1997}, parasite-host models \cite{wang2011}, cancer therapy \cite{rutter2017}, vector-borne illnesses \cite{rashkov2019}, and a two-strain dengue fever model \cite{rashkov2021}. We validate the models by parameter-fitting to COVID-19 incidence data in the United States across multiple strains, including the Delta, Omicron, and Kraken variants. The results suggest that differences in the temporary immunity periods and partial cross-immunity are sufficient to account for the dramatic shifts in variant proportions we have seen during the course of the COVID-19 pandemic.




\section{Main Results}


\subsection{Basic Two-Strain Model}
\label{sec:basic}

Consider the following two-strain compartmental model constructed after the classical SIR (Susceptible-Infectious-Removed) model introduced by Kermack and McKendrick in 1927 \cite{Kermack1927}:
\begin{equation}
    \small
    \label{SIR}
    \begin{tikzcd}
    \mbox{\fbox{\begin{tabular}{c} Infectious \\ \emph{Original Strain} \\($I_1$)\end{tabular}}} \arrow[rr,"\gamma_1"] & & \mbox{\fbox{\begin{tabular}{c} Immune \\ \emph{Original Strain} \\($R_1$)\end{tabular}}} \arrow[ld,"\sigma_1"']  \\[-0.4in]
    & \mbox{\fbox{\begin{tabular}{c} Susceptible \\($S$)\end{tabular}}}  \arrow[lu,"\beta_1"'] \arrow[dr,"\beta_2"] & \\[-0.4in]
    \mbox{\fbox{\begin{tabular}{c} Immune \\ \emph{Emerging Strain} \\($R_2$)\end{tabular}}} \arrow[ru,"\sigma_2"] & & \mbox{\fbox{\begin{tabular}{c} Infectious \\ \emph{Emerging Strain} \\($I_2$)\end{tabular}}} \arrow[ll,"\gamma_2"'] \\[-0.2in]
\end{tikzcd}
\end{equation}
In the model \eqref{SIR}, we allow each strain to infect a common pool of susceptible individuals at different rates ($\beta_1$ and $\beta_2$, respectively, for the original and emerging strain). We also allow each strain to exhibit different mean infection periods ($\gamma_1^{-1}$ and $\gamma_2^{-1}$) and different mean temporary immunity periods ($\sigma_1^{-1}$ and $\sigma_2^{-1}$). We do not allow co-infection and assume complete cross-immunity so that individuals catch one strain at a time and completely recover before they are capable of being infected by either strain again. The model \eqref{SIR} is a special case of those considered in \cite{wang2022,wang2022-2}.

The model \eqref{SIR} can be corresponded to the following system of ordinary differential equations:
\begin{equation}
\label{SIR-DE}
\left\{ \; \; \;
    \begin{aligned}
    \frac{dS}{dt} & = - \frac{S}{N} \left( \beta_1 I_1 + \beta_2 I_2 \right) + 
    \sigma_1 R_1 + \sigma_2 R_2, & & \\
    \frac{dI_1}{dt} & = \frac{\beta_1}{N} S I_1 - \gamma_1 I_1, & \frac{dI_2}{dt} & = \frac{\beta_2}{N} S I_2 - \gamma_2 I_2, \\
    \frac{dR_1}{dt} & = \gamma_1 I_1 - \sigma_1 R_1, &     \frac{dR_2}{dt} & = \gamma_2 I_2 - \sigma_2 R_2.
    \end{aligned}
\right.
\end{equation}
Note that the equations \eqref{SIR-DE} imply that the total population is constant, i.e. $N = S + I_1 + R_1 + I_2 + R_2$. Setting $S = N - I_1 - R_1 - I_2 - R_2$, we reduce the system to:
\begin{equation}
    \label{SIR-DE-reduced}
    \left\{ \; \; \;
    \begin{aligned}
    \frac{dI_1}{dt} & = \frac{\beta_1}{N} (N - I_1 - R_1 - I_2 - R_2) I_1 - \gamma_1 I_1, & \frac{dI_2}{dt} & = \frac{\beta_2}{N} (N - I_1 - R_1 - I_2 - R_2) I_2 - \gamma_2 I_2, \\
    \frac{dR_1}{dt} & = \gamma_1 I_1 - \sigma_1 R_1, &     \frac{dR_2}{dt} & = \gamma_2 I_2 - \sigma_2 R_2.
    \end{aligned}
    \right.
\end{equation}
It can be easily computed that the system \eqref{SIR-DE-reduced} only permits the following steady states, where $\mathbf{x} = (S, I_1, R_1, I_2, R_2)$:
\begin{eqnarray}
\label{dfe}
\mathbf{x}_0 & = & (N, 0, 0, 0, 0) \\
\label{ose}
\mathbf{x}_1 & = & \displaystyle{\left(\frac{N \gamma_1}{\beta_1}, \frac{N \sigma_1 (\beta_1 - \gamma_1)}{\beta_1(\gamma_1 + \sigma_1)},\frac{N \gamma_1(\beta_1 - \gamma_1)}{\beta_1(\gamma_1 + \sigma_1)},0,0\right)} \\
\label{ese}
\mathbf{x}_2 & = & \displaystyle{\left(\frac{N \gamma_2}{\beta_2}, 0,0,\frac{N \sigma_2 ( \beta_2 - \gamma_2)}{\beta_2( \gamma_2 + \sigma_2)},\frac{N \gamma_2(\beta_2 - \gamma_2)}{\beta_2( \gamma_2 + \sigma_2)}\right)}
\end{eqnarray}
The \emph{disease free steady state} $\mathbf{x}_0$ exists for all parameter values, while the \emph{original strain only steady state} $\mathbf{x}_1$ is physically relevant if and only if $\beta_1 > \gamma_1$ and the \emph{emerging strain only steady state} $\mathbf{x}_2$ is physically relevant if and only if $\beta_2 > \gamma_2$. The system does not have the capacity for a \emph{coexistence steady state}, i.e. a steady state $\mathbf{x}_{12}$ with $I_1 > 0$ and $I_2 > 0$.

\subsection{Basic Reproduction Number}

The basic reproduction number of a disease, denoted $\mathscr{R}_0$, is one of the most important and well-studied parameters in the study of infectious disease spread. Intuitively, it corresponds to the expected number of secondary infections produced by a single primary infection in a fully susceptible population \cite{diekmann1990}. Consequently, an infectious disease has the capacity to infiltrate a population if and only if $\mathscr{R}_0 > 1$. For multi-strain models like \eqref{SIR}, however, we are also interested in the capacity of an individual \emph{strain} to infiltrate a population, either in the absence or presence of other strains. Following the notation of \cite{VANDENDRIESSCHE2002}, we let the basic reproduction number of strain $i$ in the absence of other strains be denoted by $\mathscr{R}_i$, and the basic reproduction number of strain $i$ in the presence of strain $j$ be denoted by $\mathscr{R}_{ij}$. The threshold $\mathscr{R}_i > 1$ suggests that strain $i$ has the capacity to infiltrate the population in the absence of other strains, while the condition $\mathscr{R}_{ij} > 1$ suggests that strain $i$ has the capacity to infiltrate a population already infected at a steady level with strain $j$. 


For compartmental differential equations models, the next-generation method can be used to calculate the basic reproduction numbers \cite{diekmann2009,heffernan2005,VANDENDRIESSCHE2002,van2008,van2017} (see Appendix \ref{app:ngm} for details). For the basic two-strain model \eqref{SIR-DE-reduced}, the next generation method gives the parameters (see Appendix \ref{app:basic} for details):
\[
\mathscr{R}_1 = \displaystyle{\frac{\beta_1}{\gamma_1}}, \mathscr{R}_2 = \displaystyle{\frac{\beta_2}{\gamma_2}}, \mathscr{R}_{12} = \displaystyle{\frac{\mathscr{R}_1}{\mathscr{R}_2}}, \mathscr{R}_{21} = \frac{\mathscr{R}_2}{\mathscr{R}_1}, \mbox{ and }\displaystyle{\mathscr{R}_0 = \max\{ \mathscr{R}_1, \mathscr{R}_2 \}}.    
\]
It follows from $\mathscr{R}_i > 1$, $i=1,2,$ that strain $i$ has the capacity to infiltrate a disease-free population only if $\beta_i > \gamma_i$, and from $\mathscr{R}_0 = \max\{ \mathscr{R}_1, \mathscr{R}_2 \} > 1$ that the disease will infiltrate the population if at least one strain is able to infiltrate. The condition $\mathscr{R}_{ij} > 1$ suggests the strain $i$ has the capacity to infiltrate a population already infected with strain $j$ if $\mathscr{R}_i > \mathscr{R}_j$, i.e. its basic reproduction number must be strictly higher than that of the other strain. 

It is furthermore known from the results of \cite{wang2022,wang2022-2} that: (a) the disease free steady state $\mathbf{x}_0$ \eqref{dfe} is stable if $\mathscr{R}_1 < 1$ and $\mathscr{R}_2 < 1$; (b) the original strain only steady state $\mathbf{x}_1$ \eqref{ose} is stable if $\mathscr{R}_1 > 1$ and $\mathscr{R}_1 > \mathscr{R}_2$; and (c) the emerging strain only steady state $\mathbf{x}_2$ \eqref{ese} is stable if $\mathscr{R}_2 > 1$ and $\mathscr{R}_2 > \mathscr{R}_1$. Which individual strains survives is only dependent on the relative values of the basic reproduction numbers $\mathscr{R}_1$ and $\mathscr{R}_2$. Ultimately, the more contagious strain will infiltrate the population and eliminate the other strain; coexistence of strains is not permitted in the long-term dynamics.




\subsection{Full Asymmetric Temporary Immunity Periods and Partial Cross-Immunity Model}

We now extend the two-strain model \eqref{SIR} to include asymmetric temporary immunity periods and partial cross-immunity:
\begin{equation}
    \small
    \label{SIR2}
    \begin{tikzcd}
    \mbox{\fbox{\begin{tabular}{c} Infectious \\ \emph{Original Strain} \\($I_1$)\end{tabular}}} \arrow[rr,"\gamma_1"] & & \mbox{\fbox{\begin{tabular}{c} Immune \\ \emph{Original Strain} \\($R_1$)\end{tabular}}} \arrow[ld,"\sigma_1"'] \arrow[dd,"\beta_2"] \\[-0.4in]
    & \mbox{\fbox{\begin{tabular}{c} Susceptible \\($S$)\end{tabular}}}  \arrow[lu,"\beta_1"'] \arrow[dr,"\beta_2"] & \\[-0.4in]
    \mbox{\fbox{\begin{tabular}{c} Immune \\ \emph{Emerging Strain} \\($R_2$)\end{tabular}}} \arrow[uu,"\beta_1(1-\epsilon)"'] \arrow[ru,"\sigma_2"] & & \mbox{\fbox{\begin{tabular}{c} Infectious \\ \emph{Emerging Strain} \\($I_2$)\end{tabular}}} \arrow[ll,"\gamma_2"'] \\[-0.2in]
\end{tikzcd}
\end{equation}
In the model \eqref{SIR2}, we assume individuals in the temporary immunity states $R_1$ and $R_2$ can only be infected by the \emph{other} strain. Those recovered from the original strain ($R_1$) are infected by the emerging strain at the same rate as susceptible individuals ($S$); however, those recovered from the emerging strain ($R_2$) are provided with partial protection from catching the original strain. This asymmetric cross-immunity can occur when the antibodies produced from infection by the emerging strain are sufficient protection against the original strain, but not the other way around. The degree of cross-immunity is tuned by the parameter $0 \leq \epsilon \leq 1$, with $\epsilon = 0$ corresponding to no cross-immunity, and $\epsilon = 1$ corresponding to complete cross-immunity.

Note that we have opted not to include in \eqref{SIR2} a latency period, which is often incorporated in the form of an exposed class of individuals. The decision to not include a latency period was made for mathematical tractability and to allow us to focus on other key aspects of the disease dynamics, such as transmission rates, immunity periods, and the impact of emerging variants. While the latency period is an important component of the disease process, its omission does not significantly alter the general dynamics of the model leading to similar results for the reproduction numbers \cite{van2017,wang2022-2}. 
However, we note that the latency period plays a role in the understanding of disease transmission and can influence the effectiveness of control measures, such as testing and lockdown protocols. 

We utilize the following system of ODEs to model the time evolution of the full model \eqref{SIR2}:
\begin{equation}
    \label{SIR-partial}
    \left\{ \; \; \;
    \begin{aligned}
    \frac{dS}{dt} & = - \frac{S}{N} \left( \beta_1 I_1 + \beta_2 I_2 \right) +  \sigma_1 R_1 + \sigma_2 R_2, & & \\
    \frac{dI_1}{dt} & = \frac{\beta_1}{N} I_1 (S + (1-\epsilon) R_2) - \gamma_1 I_1, & \frac{dI_2}{dt} & = \frac{\beta_2}{N} I_2 (S + R_1) - \gamma_2 I_2, \\
    \frac{dR_1}{dt} & = \gamma_1 I_1 - \sigma_1 R_1 - \frac{\beta_2}{N} R_1 I_2, & \frac{dR_2}{dt} & = \gamma_2 I_2 - \sigma_2 R_2 - \frac{\beta_1}{N}  (1 - \epsilon) R_2 I_1.\\
    \end{aligned}
    \right.
\end{equation}
Substituting $S = N - I_1 - R_1 - I_2 - R_2$ into \eqref{SIR-partial}, we obtain the following equivalent system of ODEs:
\begin{equation}
    \label{SIR-partial-reduced}
    \left\{ \; \; \;
    \begin{aligned}
    \frac{dI_1}{dt} & = \frac{\beta_1}{N} (N - I_1 - R_1 - I_2 - \epsilon R_2) I_1 - \gamma_1 I_1, & \frac{dI_2}{dt} & = \frac{\beta_2}{N} (N - I_1 - I_2 - R_2) I_2 - \gamma_2 I_2, \\
    \frac{dR_1}{dt} & = \gamma_1 I_1 - \sigma_1 R_1 - \frac{\beta_2}{N} R_1 I_2, & \frac{dR_2}{dt} & = \gamma_2 I_2 - \sigma_2 R_2 - \frac{\beta_1}{N} (1 - \epsilon) R_2 I_1.
    \end{aligned}
    \right.
\end{equation}

The analysis of \eqref{SIR-partial-reduced} is more complicated than that of \eqref{SIR-DE-reduced}. Nevertheless, it can be easily checked that the three steady states of \eqref{SIR-DE-reduced} from Section \ref{sec:basic}, \eqref{dfe}-\eqref{ese}, are also steady states of \eqref{SIR-partial-reduced}. The basic reproductive numbers listed below \eqref{r} can be determined using the next generation method \cite{VANDENDRIESSCHE2002} (see Appendix \ref{app:new} for details):
\begin{equation}\small
\label{r}
\mathscr{R}_1 = \displaystyle{\frac{\beta_1}{\gamma_1}}, \mathscr{R}_2 = \displaystyle{\frac{\beta_2}{\gamma_2}}, \mathscr{R}_{12} = \frac{\mathscr{R}_1}{\mathscr{R}_2} \left( \frac{(1-\epsilon)\beta_2 + \epsilon \gamma_2  + \sigma_2}{\gamma_2 + \sigma_2 } \right), \mathscr{R}_{21} =  \frac{\mathscr{R}_2}{\mathscr{R}_1} \left( \frac{\beta_1 + \sigma_1}{\gamma_1 + \sigma_1} \right),  \mbox{ and } \mathscr{R}_0 = \max \{ \mathscr{R}_1, \mathscr{R}_2 \}.
\end{equation}

The condition $\mathscr{R}_{21} > 1$, required for the emerging strain to infiltrate a population already infected at an endemic level with the original strain, can be intuitively interpreted by considering limiting cases. If the original strain has a short temporary immunity period (i.e. $\sigma_1 \to \infty$, $\sigma_1^{-1} \to 0$), the emerging strain needs to be more contagious than the original strain to gain a foothold in the population ($\mathscr{R}_2 > \mathscr{R}_1$). This is due to the emerging strain constantly having to compete with the original strain for new infections. If the original strain has a longer temporary immunity period (i.e. $\sigma_1 \to 0$, $\sigma_1^{-1} \to \infty$), however, the emerging strain only needs to be able to sustain itself in the population on its own to survive ($\mathscr{R}_2 > 1$). This is due to the emerging strain having exclusive ability to infect those who have previously been infected with the original strain.

Similar intuition holds for the condition $\mathscr{R}_{12} > 1$ required for the original strain to survive as the emerging strain infiltrates the population. If the immunity period of the emerging strain is short (i.e. $\sigma_2 \to \infty$, $\sigma_2^{-1} \to 0$), or the degree of cross-immunity provided by the emerging strain is high (i.e. $\epsilon \to 1$), then the original strain will survive only if it is more contagious than the emerging strain ($\mathscr{R}_1 > \mathscr{R}_2$). If, however, the immunity period of the emerging strain is long (i.e. $\sigma_2 \to 0$, $\sigma_2^{-1} \to \infty$) then the original strain will survive only if it can survive on its own ($\mathscr{R}_1>1$).

Unlike the basic model \eqref{SIR}, the model \eqref{SIR2} also allows for a co-existence steady state where both strains survive and circulate in the population, so long as specific conditions on the basic reproduction numbers, temporary immunity periods, and degree of cross-immunity are satisfied. This is more formally stated in the following result, which we prove in Appendix \ref{app:a}.

\begin{thm}
\label{thm-coexistence}
The full two-strain model with asymmetric temporary immunity periods and partial cross-immunity \eqref{SIR-partial-reduced} has a coexistence steady state (i.e. $\mathbf{x}_{12} = (S,I_1,R_1,I_2,R_2)$ with $I_1 >0$ and $I_2 > 0$) if and only if $\min\{ \mathscr{R}_1, \mathscr{R}_2, \mathscr{R}_{12}, \mathscr{R}_{21} \} > 1$, where $\mathscr{R}_1, \mathscr{R}_2, \mathscr{R}_{12},$ and $\mathscr{R}_{21}$ are the basic reproduction numbers from \eqref{r}. Furthermore, this coexistence steady state is unique whenever it exists.
\end{thm}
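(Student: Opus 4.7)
The plan is to reduce the four-equation steady-state system to a pair of algebraic curves in the positive quadrant of the $(I_1, I_2)$-plane and analyze their intersections. Assuming $I_1, I_2 > 0$, the equations $dI_1/dt = 0 = dI_2/dt$ yield the two linear reproduction identities
\[
S + (1-\epsilon)R_2 = \frac{N}{\mathscr{R}_1}, \qquad S + R_1 = \frac{N}{\mathscr{R}_2},
\]
while $dR_1/dt = 0 = dR_2/dt$ yield the rational expressions
\[
R_1 = \frac{\gamma_1 N I_1}{\sigma_1 N + \beta_2 I_2}, \qquad R_2 = \frac{\gamma_2 N I_2}{\sigma_2 N + \beta_1(1-\epsilon) I_1},
\]
which are automatically positive whenever $I_1, I_2 > 0$. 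Substituting $S = N - I_1 - I_2 - R_1 - R_2$ into the reproduction identities and subtracting one from the other, I obtain the two scalar relations $I_1 + I_2 + R_2 = N(1 - 1/\mathscr{R}_2)$ and $R_1 - (1-\epsilon)R_2 = N(1/\mathscr{R}_2 - 1/\mathscr{R}_1)$; together with the rational expressions for $R_1$ and $R_2$ these reduce the problem to a coupled system $F(I_1,I_2) = G(I_1,I_2) = 0$, with coexistence steady states in bijection with its positive solutions.

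The first relation can be solved explicitly for $I_2$ as a function of $I_1$, giving a curve $C_2 \colon I_2 = h_2(I_1)$ that lies in the positive quadrant on the interval $I_1 \in [0, N(1 - 1/\mathscr{R}_2)]$ and satisfies $h_2(0) = N\sigma_2(1 - 1/\mathscr{R}_2)/(\gamma_2+\sigma_2)$, which is positive precisely when $\mathscr{R}_2 > 1$. The second relation is linear in $I_1$ after clearing denominators and yields a curve $C_1 \colon I_1 = h_1(I_2)$, whose endpoints and zeros can be read off from its numerator; a short manipulation using $\gamma_i \mathscr{R}_i = \beta_i$ translates these endpoint conditions directly into the reproduction-number thresholds. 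Under $\min\{\mathscr{R}_1, \mathscr{R}_2, \mathscr{R}_{12}, \mathscr{R}_{21}\} > 1$, I would show that $C_1$ and $C_2$ are positioned so as to force a transverse crossing in the open positive quadrant: one curve enters the quadrant above the other on one coordinate axis and exits below it on the other, and the intermediate value theorem then produces at least one intersection. Conversely, any positive intersection forces each endpoint inequality, hence each of the four reproduction numbers, to exceed unity; the conditions $\mathscr{R}_1, \mathscr{R}_2 > 1$ come for free from positivity of $S$ combined with the reproduction identities, while $\mathscr{R}_{12}, \mathscr{R}_{21} > 1$ are equivalent to the boundary steady states $\mathbf{x}_1$ and $\mathbf{x}_2$ being susceptible to cross-invasion, which is precisely what the geometry of the crossing encodes.

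The main obstacle will be uniqueness. Neither $h_1$ nor $h_2$ is globally monotone: $h_2$ is the product of a decreasing linear factor and an increasing rational factor on $[0, N(1 - 1/\mathscr{R}_2)]$, and $h_1$ has an analogous structure. My plan is to clear denominators in $F = G = 0$ and eliminate one of the variables, reducing the problem to a single polynomial equation on a prescribed interval whose positive roots can be bounded by a degree and sign analysis; equivalently, I would verify that the Jacobian $\partial(F,G)/\partial(I_1,I_2)$ has definite sign at every positive intersection, so that the boundary sign pattern rules out any fold and therefore any second crossing. Executing this elimination or transversality argument cleanly, without a case explosion in $\epsilon$, the $\sigma_i$, and the relative ordering of $\mathscr{R}_1$ and $\mathscr{R}_2$, is the delicate step of the proof.
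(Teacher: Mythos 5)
Your reduction to two algebraic curves is the right general strategy, and your existence skeleton (endpoint comparison plus the intermediate value theorem, with the endpoint inequalities translating into the four reproduction-number thresholds) matches what the paper actually does. But there is a genuine gap exactly where you flag one: uniqueness. In your coordinates $(I_1,I_2)$ neither curve is monotone, and the two escape routes you sketch --- eliminating a variable to get a single polynomial whose positive roots you bound by ``degree and sign analysis,'' or checking that $\partial(F,G)/\partial(I_1,I_2)$ has definite sign at every positive intersection --- are left entirely unexecuted, and neither is routine: the eliminant is a high-degree polynomial in $\epsilon$, $\sigma_1$, $\sigma_2$ with no obvious sign structure, and a global transversality claim at \emph{a priori} unknown intersection points is not something you can read off from boundary data. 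Since the theorem asserts uniqueness, this is not a deferrable detail. (A smaller error: your curve $C_1$ is not linear in $I_1$ after clearing denominators --- the term $\gamma_1 N I_1/(\sigma_1 N+\beta_2 I_2)$ multiplied by $\sigma_2 N+\beta_1(1-\epsilon)I_1$ produces an $I_1^2$ term whenever $\epsilon<1$ --- so even the explicit form $I_1=h_1(I_2)$ requires solving a quadratic.)

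The idea you are missing is a better choice of the second coordinate. The paper first uses the emerging-strain block ($I_2'=0$, $R_2'=0$) to solve for $I_2=\phi(I_1)$ and $R_2=\psi(I_1)$ as explicit rational functions of $I_1$ alone, and then writes the two remaining steady-state equations as graphs over $I_1$ in the \emph{$R_1$} coordinate: $R_1=f(I_1)$ from the strain-1 reproduction identity and $R_1=g(I_1)$ from the $R_1$ recovery balance. In these coordinates one can compute $f'(I_1)<0$ and $g'(I_1)>0$ on the whole relevant interval, with $g(0)=0$; strict opposite monotonicity then gives uniqueness for free, and the same two endpoint inequalities you identified, namely $f(0)>0\Leftrightarrow\mathscr{R}_{12}>1$ and $g(I_1^*)>f(I_1^*)\Leftrightarrow\mathscr{R}_{21}>1$ with $I_1^*=N(1-1/\mathscr{R}_2)$, become necessary and sufficient for the crossing to land in the coexistence region. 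If you reparametrize your second curve from $I_2=h_1^{-1}$ to $R_1$ versus $I_1$, your argument closes.
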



\begin{table}[t!]
    \centering
    \begin{tabular}{l|c|l}
    \hline \hline
    Variable & Units & Description \\
    \hline \hline
    $S \geq 0$ & people & Susceptible individuals \\
    $I_i \geq 0$ & people & Infectious individuals ($i^{th}$ strain) \\
    $R_i \geq 0$ & people & Temporarily immune individuals ($i^{th}$ strain)\\
    $t \geq 0$ & days & Time elapsed \\
    \hline \hline
    Parameter & Units & Description  \\
    \hline \hline
    $\beta_i \geq 0$ & days$^{-1}$ & Transmission rate ($i^{th}$ strain) \\
    $\gamma_i^{-1} \geq 0$ & days & Infectious period ($i^{th}$ strain)\\
    $\sigma_i^{-1} \geq 0$ & days & Temporary immunity period ($i^{th}$ strain) \\
    $0 \leq \epsilon \leq 1$ & --- & Degree of cross-immunity \\
    $\mathscr{R}_0 \geq 0$ & --- & Basic reproduction number of disease \\
    $\mathscr{R}_i \geq 0$ & --- & Basic reproduction number of strain $i$ \\
    $\mathscr{R}_{ij} \geq 0$ & --- & Basic reproduction number of strain $i$ in presence of strain $j$ \\
    \hline \hline
    \end{tabular}
    \caption{\small Variables and parameters for the full model \eqref{SIR-partial-reduced} and reduced model \eqref{SIR-piecewise}.}
    \label{table1}
\end{table}



\subsection{Reduced Asymmetric Temporary Immunity Periods and Partial Cross-Immunity Model}

Although we are able to determine conditions for the existence of four steady states of \eqref{SIR-partial-reduced}, analyzing the dynamics remains difficult to perform directly due to the nonlinearities, four-dimensional state space, and seven undetermined parameters. We are also not able to write down an explicit closed form for the co-existence steady state. This makes linear stability and bifurcation analysis challenging.

To make analysis of the model's dynamics more tractable, we perform a model reduction of \eqref{SIR-partial-reduced}. We are primarily interested in the situation where the original strain has become endemic in the population before the emerging strain arrives. This suggests the modeling assumption that the original strain remains at its endemic steady state throughout the dynamics of the emerging strain. To update the dynamics of the emerging strain, we substitute the steady state values of $I_1$ and $R_1$ into the dynamical equations for $I_2$ and $R_2$. This produces a system of two differential equations for the emerging strain $I_2$ and $R_2$, and two algebraic equations for the original strain $I_1$ and $R_1$. Consequently, we assume that the first two equations in \eqref{SIR-partial-reduced} are at steady state. This gives the following system of equations:
\begin{equation}
    \label{eq10}
    \left\{ \; \; \;
    \begin{aligned}
    & \frac{\beta_1}{N} (N - I_1 - R_1 - I_2 - \epsilon R_2) I_1 - \gamma_1 I_1 = 0, \\
    & \gamma_1 I_1 - \sigma_1 R_1 - \frac{\beta_2}{N} R_1 I_2 = 0.
    \end{aligned}
    \right.
\end{equation}
Solving \eqref{eq10} gives rise to the following function, which tracks the steady state level of $I_1$ as a function of the infection level of $I_2$ and $R_2$:
\begin{equation}
    \label{omega}
    \omega(I_2,R_2) = \left\{
    \begin{aligned}
        & \frac{(N(\beta_1 - \gamma_1) - \beta_1 I_2 - \beta_1 \epsilon R_2)(\beta_2 I_2 + N\sigma_1)}{\beta_1 (\beta_2 I_2 + N(\gamma_1 + \sigma_1))}, & & \mbox{   if } I_2 + \epsilon R_2 <  N \left(1 - \frac{1}{\mathscr{R}_1}\right) \\
        & 0, & & \mbox{   if } I_2 + \epsilon R_2 \geq N \left(1 - \frac{1}{\mathscr{R}_1}\right).
    \end{aligned}
    \right.
\end{equation}
We substitute the steady state function $I_1 = \omega(I_2,R_2)$ into \eqref{SIR-partial-reduced} to get the reduced two-strain with asymmetric temporary immunity periods and partial cross-immunity model: 
\begin{equation}
    \label{SIR-piecewise}
    \left\{
    \begin{aligned}
    \frac{dI_2}{dt} & = \frac{\beta_2}{N} (N - \omega(I_2,R_2) - I_2 - R_2) I_2 - \gamma_2 I_2, \\
    \frac{dR_2}{dt} & = \gamma_2 I_2 - \sigma_2 R_2 - \frac{\beta_1}{N} (1-\epsilon) \omega(I_2,R_2) R_2.
    \end{aligned}
    \right.
\end{equation}
Notice that \eqref{SIR-piecewise} is a planar system which only depends on the emerging strain ($I_2$ and $R_2$). The original strain is tracked through the steady state function $I_1 = \omega(I_2,R_2)$ \eqref{omega}.

Since $\omega(I_2,R_2)$ is a piecewise-defined function, the system \eqref{SIR-piecewise} is a state-dependent switching system \cite{LIU2013}. The dynamics of such systems are more varied than standard dynamical systems but have been increasingly studied in recent years due to their applications in control theory \cite{egerstedt2003,sun2011,zhu2015}. We note, in particular, that the right-hand side of \eqref{SIR-piecewise} is continuous everywhere but not differentiable at $\displaystyle{I_2 + \epsilon R_2 = N \left(1 - \frac{1}{\mathscr{R}_1}\right)}$. The system \eqref{SIR-piecewise} is planar and consequently its dynamics can be analyzed by methods such as linear stability analysis, phase-plane analysis, and the Bendixson-Dulac criterion \cite{burton2005}. 

The method used to reduce \eqref{SIR-partial-reduced} to \eqref{SIR-piecewise} mirrors that of the quasi-steady state assumption (QSSA). The QSSA is used when there is a parametrizable time-scale separation between two parts of a process and has been popularly used in biochemistry to justify the Michaelis-Menten \cite{michaelis1913} and Hill \cite{hill1910} kinetic rate functions. This and other asymptotic methods also has been used to reduce models in mathematical epidemiology and other areas of mathematical biology in order to determine the long-term behavior of complicated models \cite{castillo1997,feliu2013,feliu2022,rashkov2021,rashkov2019,rutter2017,thieme1992,wang2011}. For \eqref{SIR-partial-reduced} and \eqref{SIR-piecewise}, however, we do not assume that there is a parametrizable time-scale separation between the original and emerging strain; in fact, such an assumption would be poorly justified for competing strains of COVID-19 as the infectivity and temporary immunity periods are on similar orders of magnitude. Consequently, we cannot treat the reduced system \eqref{SIR-piecewise} as a asymptotically limiting case of \eqref{SIR-partial-reduced}. Nevertheless, numerical simulations show that the full system \eqref{SIR-partial-reduced} and reduced system \eqref{SIR-piecewise} have comparable dynamics when the full system \eqref{SIR-partial-reduced} is assumed to start near the endemic steady state (see Figure \ref{fig:phaseplane}). We leave further consideration of the relationship between the behaviors of the full model \eqref{SIR-partial-reduced} and reduced model \eqref{SIR-piecewise} as future work.

\subsection{Reduced Model Analysis (All Parameters)}

We now turn our attention to the dynamical behavior of the switching system of differential equations \eqref{SIR-piecewise}. Firstly, we want to ensure that the model is well-behaved and that solutions remain physically meaningful at all times. This is verified by the following result.

\begin{thm}
\label{theorem1}
Solutions to the reduced two-strain model \eqref{SIR-piecewise} starting in the following compact set $\Lambda$ exist, are unique, and remain in $\Lambda$ for all time $t \geq 0$:
\begin{equation}
    \label{trapping}
    \Lambda = \{ (I_2,R_2) \in \mathbb{R}^2_{\geq 0} \; | \; I_2 + R_2 \leq N \}.
\end{equation}
\end{thm}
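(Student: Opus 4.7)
My plan is to combine three standard ingredients: a Picard–Lindelöf-type local existence and uniqueness result, a tangent-cone/Nagumo-style boundary analysis showing that $\Lambda$ is positively invariant under the flow of \eqref{SIR-piecewise}, and a compactness argument to extend local solutions to all $t \geq 0$. The only non-routine aspect is that the right-hand side is only piecewise smooth because of $\omega$.

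First I would establish local existence and uniqueness by showing that the vector field on the right-hand side of \eqref{SIR-piecewise} is locally Lipschitz on an open neighborhood of $\Lambda$. Away from the switching manifold $I_2 + \epsilon R_2 = N(1 - 1/\mathscr{R}_1)$ each branch of $\omega$ is a smooth rational function whose denominator $\beta_1(\beta_2 I_2 + N(\gamma_1 + \sigma_1))$ is bounded away from zero for $I_2 \geq 0$, so each branch is $C^\infty$ and hence Lipschitz on $\Lambda$. To glue the branches I will verify that $\omega$ is continuous across the switching manifold: on that manifold, the factor $N(\beta_1 - \gamma_1) - \beta_1 I_2 - \beta_1 \epsilon R_2 = \beta_1\bigl(N(1-1/\mathscr{R}_1) - I_2 - \epsilon R_2\bigr)$ in the numerator vanishes, so the rational branch agrees with the zero branch. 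A continuous function that is piecewise Lipschitz with a shared Lipschitz bound on each closed piece is globally Lipschitz on $\Lambda$, so the Picard–Lindelöf theorem applies and yields a unique local solution through every initial condition in $\Lambda$.

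Next I would prove forward invariance of $\Lambda$ by checking that the vector field points into (or tangentially to) $\Lambda$ at each face of $\partial\Lambda$. On $\{I_2 = 0\}$ the equation gives $dI_2/dt = 0$, so this face is invariant; on $\{R_2 = 0\}$ we have $dR_2/dt = \gamma_2 I_2 \geq 0$. For the hypotenuse $\{I_2 + R_2 = N\}$ I would compute
\begin{equation*}
\frac{d}{dt}(I_2 + R_2) = \frac{\beta_2}{N}(N - \omega - I_2 - R_2) I_2 - \sigma_2 R_2 - \frac{\beta_1}{N}(1-\epsilon)\omega R_2,
\end{equation*}
which on that face becomes $-\frac{\beta_2}{N}\omega I_2 - \sigma_2 R_2 - \frac{\beta_1}{N}(1-\epsilon)\omega R_2 \leq 0$, since $\omega \geq 0$ throughout $\Lambda$ (as just verified from the sign of the first factor in the numerator together with the zero branch). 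Thus at every boundary point the vector field lies in the tangent cone to $\Lambda$, and Nagumo's theorem (or a direct one-sided-derivative argument) guarantees that any solution starting in $\Lambda$ remains in $\Lambda$ for as long as it exists.

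Finally, because $\Lambda$ is compact and the solution cannot exit it, no trajectory can blow up in finite time, so the standard continuation argument extends the local solution to all $t \geq 0$. The main obstacle I anticipate is the Lipschitz verification at the switching manifold; I would handle it by the continuity-plus-piecewise-smoothness argument above, which is cleaner than trying to bound one-sided derivatives of $\omega$ directly across the kink.
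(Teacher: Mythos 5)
Your proposal is correct and follows essentially the same route as the paper: verify that the vector field points into $\Lambda$ on each of the three boundary faces (with the same computations, including $L' \leq 0$ on $I_2+R_2=N$ using $\omega \geq 0$), then invoke Lipschitz continuity on the compact set $\Lambda$ for existence and uniqueness. Your treatment of the Lipschitz issue is in fact more careful than the paper's, which asserts that continuity on a compact set yields Lipschitz continuity; your gluing of the two smooth branches of $\omega$ across the switching line via their common continuous value is the right way to justify that step.
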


\begin{proof}

We first show that $\Lambda$ \eqref{trapping} is a trapping region of \eqref{SIR-piecewise} with three boundaries: $I_2=0$, $R_2=0$, and $I_1 + R_2 = N$. Consider the boundary $I_2=0$. Substituting $I_2=0$ into \eqref{SIR-piecewise} gives $I_2'=0$. It follows that the $R_2$-axis is an invariant set. Therefore, no solution may pass through $I_2=0$. 
Next, consider the boundary $R_2=0$. Substituting $R_2=0$ into \eqref{SIR-piecewise} gives $R_2'= \gamma_2 I_2 > 0$ for $I_2  > 0$. It follows that trajectories cannot excape the positive quadrant through $R_2=0$. 
Lastly, consider the boundary $I_2+R_2=N$ and the function $L(t) = I_2(t) + R_2(t)$. Along trajectories $(I_2(t),R_2(t))$ of \eqref{SIR-partial-reduced} we have that
\[\begin{aligned} L'(t) = I_2'(t)+R_2'(t) & = \frac{\beta_2}{N} (N - \omega(I_2,R_2) - I_2 - R_2) I_2 - \sigma_2 R_2 - \frac{\beta_1}{N} (1 - \epsilon) \omega(I_2,R_2) R_2\\
& = - \frac{\beta_2}{N} \omega(I_2,R_2) I_2 - \sigma_2 R_2 - \frac{\beta_1}{N} (1 - \epsilon) \omega(I_2,R_2) R_2
\end{aligned}\]
where we have used the consideration that we are only interested in $I_2+R_2 = N$ to reduce the first term. Since $\omega(I_2,R_2) \geq 0$ by \eqref{omega}, we have that $L'(t) < 0$ whenever $L(t) = N$. Consequently, trajectories starting in the set $\Lambda$ given by \eqref{trapping} remain in $\Lambda$, and we are done.

To prove existence and uniqueness with $\Lambda$, we note that the vector field \eqref{SIR-piecewise} is continuous in $\Lambda$ and that $\Lambda$ is closed and bounded and therefore a compact set. It follows that the vector field is Lipschitz continuous within $\Lambda$ which implies that solutions exist and are unique, and we are done.
\end{proof}

\subsection{Reduced Model Analysis ($\epsilon=0$ or $\epsilon=1$)}

We now consider the dynamics of \eqref{SIR-piecewise} within the invariant set $\Lambda$ in the specific cases of $\epsilon = 0$ (no cross-immunity) and $\epsilon = 1$ (full cross-immunity). Note that the system is planar so that it is sufficient to consider properties of the nullclines. We have the following result, which we prove in Appendix \ref{app:b}.

\begin{thm}
\label{theorem3}
Consider the reduced two-strain with asymmetric temporary immunity periods system \eqref{SIR-piecewise} and basic reproduction numbers  $\mathscr{R}_1$, $\mathscr{R}_2$, $\mathscr{R}_{12}$, and $\mathscr{R}_{21}$ as in \eqref{r}. Suppose that $\mathscr{R}_1 > 1$, $\mathscr{R}_2 > 1$, and $\mathscr{R}_{21} > 1$ and either $\epsilon = 0$ or $\epsilon = 1$. Then the system has the following properties in \emph{int}$(\Lambda)$:
\begin{enumerate}
    \item \textbf{Vector field:} The $I_2$-nullcline is continuous, has a strictly positive $R_2$-intercept, and is strictly decreasing, and the $R_2$-nullcline is continuous, intercepts the $R_2$-axis at $R_2=0$, and is strictly increasing. Furthermore, the regions bound by the $I_2$- and $R_2$-nullclines have the following directional properties:
    \begin{enumerate}
        \item Above the $I_2$- and $R_2$- nullclines, $I_2' < 0$ and $R_2' < 0$.
        \item Above the $I_2$-nullcline and below the $R_2$-nullcline, $I_2' < 0$ and $R_2' > 0$.
        \item Below the $I_2$-nullcline and above the $R_2$-nullcline, $I_2' > 0$ and $R_2' < 0$.
        \item Below the $I_2$- and $R_2$- nullclines, $I_2' > 0$ and $R_2' > 0$.
    \end{enumerate}
    \item \textbf{Steady state existence and stability:} The $I_2$- and $R_2$-nullclines have a unique intersection, corresponding to a unique positive co-existence steady state $(\bar{I}_2,\bar{R}_2) \in \mbox{\emph{int}}(\Lambda)$. This steady state is locally exponentially stable and the global attractor for trajectories in \emph{int}$(\Lambda)$. Furthermore, we have the following:
    \begin{enumerate}
        \item If $\mathscr{R}_{12} > 1$ then $\displaystyle{0 < \bar{I}_2 + \epsilon \bar{R}_2 < N \left( 1 - \frac{1}{\mathscr{R}_1} \right)}$ and $\bar{I}_1 = \omega(\bar{I}_2,\bar{R}_2) > 0$.
        \item If $\mathscr{R}_{12} < 1$ then $\displaystyle{\bar{I}_2 + \epsilon \bar{R}_2 \geq N \left( 1 - \frac{1}{\mathscr{R}_1} \right)}$ and $\bar{I}_1 = \omega(\bar{I}_2,\bar{R}_2) = 0$.
    \end{enumerate}
\end{enumerate}
\end{thm}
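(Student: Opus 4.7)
The plan is to split $\mathrm{int}(\Lambda)$ along the switching line $\ell := \{I_2 + \epsilon R_2 = N(1 - 1/\mathscr{R}_1)\}$ into an ``endemic half'' $\Lambda^-$ (where $\omega > 0$) and an ``extinction half'' $\Lambda^+$ (where $\omega = 0$), and then to analyze Parts 1 and 2 separately in each of the two cases $\epsilon = 0$ and $\epsilon = 1$ by explicit algebraic manipulation.

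\emph{Part 1 (nullclines).} On $\Lambda^+$ the system reduces to the classical SIRS for strain $2$ alone, with $I_2$-nullcline $I_2 + R_2 = N(1 - 1/\mathscr{R}_2)$ and $R_2$-nullcline $R_2 = (\gamma_2/\sigma_2) I_2$, both with the required monotonicity. For $\Lambda^-$ with $\epsilon = 1$, the $(1-\epsilon)\omega R_2$ term vanishes, so the $R_2$-nullcline is the single ray $R_2 = (\gamma_2/\sigma_2) I_2$ throughout $\Lambda$; the $I_2$-nullcline is obtained by substituting $I_1 + R_1 = N(1 - 1/\mathscr{R}_1) - I_2 - R_2$ together with $R_1 = \gamma_1 I_1/(\sigma_1 + \beta_2 I_2/N)$ into the condition $(\beta_2/N)(S + R_1) = \gamma_2$, producing an explicit linear nullcline with slope strictly less than $-1$. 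For $\Lambda^-$ with $\epsilon = 0$, $\omega = \omega(I_2)$ and the $I_2$-nullcline is $R_2 = N(1 - 1/\mathscr{R}_2) - I_2 - \omega(I_2)$ with slope $R_1'(I_2)$; I verify $R_1'(I_2) < 0$ by direct differentiation of the closed form $R_1(I_2) = \gamma_1 (N(1 - 1/\mathscr{R}_1) - I_2)/(\sigma_1 + \beta_2 I_2/N + \gamma_1)$. The $R_2$-nullcline $R_2 = \gamma_2 I_2/(\sigma_2 + (\beta_1/N)\omega(I_2))$ is shown strictly increasing by writing $\omega = y(I_2) h(I_2)$ with $y$ linear and $h$ concave, so $\omega'' < 0$, hence $(\omega - I_2\omega')' = -I_2\omega'' > 0$. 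Continuity across $\ell$ follows from continuity of $\omega$; the positive $R_2$-intercept of the $I_2$-nullcline is forced by $\mathscr{R}_{21} > 1$; and the directional properties (a)--(d) follow because $I_2'$ and $R_2'$ change sign exactly at their respective nullclines.

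\emph{Part 2 (uniqueness, stability, and location).} Uniqueness of the intersection $(\bar I_2,\bar R_2)$ is immediate from Part 1, and existence follows from a boundary argument: at $I_2 = 0$ the $I_2$-nullcline is strictly above the $R_2$-nullcline, while the roles must reverse inside $\Lambda$ because $\mathscr{R}_2 > 1$. Local exponential stability comes from the Jacobian at $(\bar I_2,\bar R_2)$, whose off-diagonal signs are fixed by the nullcline geometry and yield negative trace and positive determinant (on $\ell$ the one-sided Jacobians from both sides give the same conclusion). Global attraction in $\mathrm{int}(\Lambda)$ combines (i) the trapping region from Theorem \ref{theorem1}, (ii) the directional cross-pattern (a)--(d), which forces a clockwise rotation around $(\bar I_2,\bar R_2)$, and (iii) the Bendixson--Dulac criterion with Dulac function $B(I_2,R_2) = 1/(I_2 R_2)$ applied separately on $\Lambda^+$ and $\Lambda^-$ to rule out closed orbits within a single region; transversality of the flow to $\ell$ off $(\bar I_2,\bar R_2)$ then eliminates closed orbits that cross $\ell$, so Poincar\'e--Bendixson collapses every $\omega$-limit set to $\{(\bar I_2,\bar R_2)\}$. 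To establish the dichotomy (a)/(b), evaluate $\bar I_2 + \epsilon \bar R_2$ using the explicit $\Lambda^-$ nullclines from Part 1: for $\epsilon = 1$, \eqref{r} gives $\mathscr{R}_{12} = \mathscr{R}_1/\mathscr{R}_2$, so $\mathscr{R}_{12} > 1$ if and only if the intersection lies in $\Lambda^-$, and otherwise the steady state sits in $\Lambda^+$ and coincides with the classical strain-$2$ endemic equilibrium (where $\omega \equiv 0$); the $\epsilon = 0$ case is analogous using the full form of $\mathscr{R}_{12}$ from \eqref{r}.

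\emph{Main obstacle.} The central difficulty is the switching structure of \eqref{SIR-piecewise}: the monotonicity and continuity of the nullclines must be reconciled across $\ell$, and the global stability argument must rule out periodic orbits that \emph{cross} $\ell$, where standard Bendixson--Dulac does not directly apply. My plan is to close this gap via transversality of the flow to $\ell$ away from $(\bar I_2,\bar R_2)$. A secondary technical point is strict monotonicity of the $R_2$-nullcline when $\epsilon = 0$, where the coupling through $\omega R_2$ requires extracting the concavity of $\omega$ from its closed form.
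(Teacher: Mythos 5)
Your Part 1 and the location dichotomy in Part 2 are essentially sound and run parallel to the paper's argument (the paper works implicitly via $\partial\omega/\partial I_2$, $\partial\omega/\partial R_2$ and the inequalities $\partial\omega/\partial I_2 + 1 > 0$, $\partial\omega/\partial R_2 + 1 > 0$, where you work with explicit closed forms and a concavity argument for $\omega$ when $\epsilon = 0$; your concavity route to the monotonicity of the $R_2$-nullcline is a legitimate alternative to the paper's direct computation). One algebraic slip: for $\epsilon = 1$ the $I_2$-nullcline in the endemic region is \emph{not} linear, since $\omega$ carries the factor $(\beta_2 I_2 + N\sigma_1)/(\beta_2 I_2 + N(\gamma_1+\sigma_1))$; the strict decrease still holds, but not for the reason you state.

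The genuine gap is in the global stability step. You propose to rule out closed orbits that cross the switching line $\ell$ by ``transversality of the flow to $\ell$ away from $(\bar I_2,\bar R_2)$.'' Transversality does not exclude crossings --- a periodic orbit generically crosses any line transversally, an even number of times. What would exclude crossings is a \emph{uniform} crossing direction along $\ell$, and that fails here: on $\ell$ we have $\omega = 0$, so for $\epsilon = 0$ the normal component of the flow is $I_2' = I_2\bigl[(\beta_2/N)(N - I_2 - R_2) - \gamma_2\bigr]$, whose sign changes with $R_2$ along the (vertical) switching line. So your argument leaves open precisely the orbits you identified as the main obstacle. The paper closes this with a Modified Dulac's Criterion (Lemma \ref{dulac}): given a Dulac function whose weighted divergence has constant sign on the interiors of both pieces $\Lambda_1,\Lambda_2$ cut out by $\Gamma = \ell$, apply Green's Theorem separately to the subregions of the putative periodic orbit created by each pair of crossings; the line integrals along the cut traverse it in opposite orientations and cancel, while the orbit integral of $-g\,dx + f\,dy$ vanishes identically, contradicting the positivity of the area integrals. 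You would need to prove this extension (or an equivalent) to make your Step 2 complete; your Dulac function $1/(I_2 R_2)$ would then serve as well as the paper's $1/I_2$. A secondary point to tighten: when the steady state lies on $\ell$ (the boundary case of the dichotomy), the vector field is not differentiable there, so the ``one-sided Jacobians'' remark needs to be replaced by an argument that does not presuppose hyperbolicity in the classical sense.
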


Theorem \ref{theorem3} establishes the uniqueness and global stability of a strictly positive steady state when $\mathscr{R}_1 > 1$, $\mathscr{R}_2> 1$, and $\mathscr{R}_{21} > 1$. It follows that, in these conditions, the emerging strain always has the capacity to infiltrate and survive in a population. Theorem \ref{theorem3} furthermore defines where that steady state lies in the state space. The original strain is able to survive as the emerging strain is infiltrating the population so long as $\mathscr{R}_{12} > 1$ and will die off otherwise. Although Theorem \ref{theorem3} is only proven for the cases of $\epsilon = 0$ (no cross-immunity) and $\epsilon = 1$ (full cross-immunity), we conjecture that the conclusions hold for all values $0 \leq \epsilon \leq 1$.

\section{Numerical Results}
\label{sec:numerical}



In this Section, we conduct some further analysis on the full and reduced two-strain with asymmetric temporary immunity period and partial cross immunity models \eqref{SIR-partial-reduced} and \eqref{SIR-piecewise}.

\begin{figure}[t!]
     \centering
     \begin{subfigure}[h]{0.29\textwidth}
         \centering
         \includegraphics[width=\textwidth]{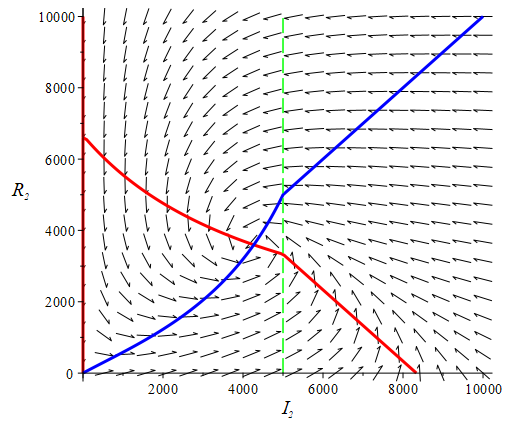}
     \end{subfigure}
     \hfill
     \begin{subfigure}[h]{0.29\textwidth}
         \centering
         \includegraphics[width=\textwidth]{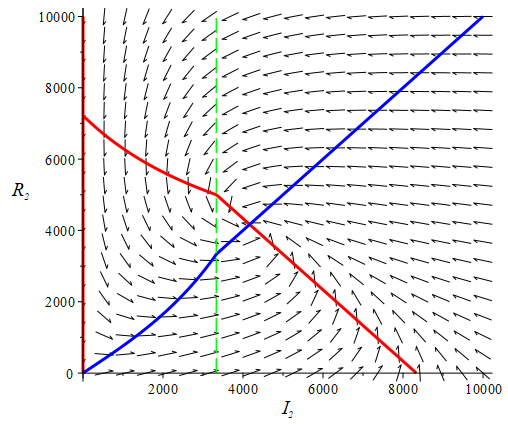}
     \end{subfigure}
     \hfill
     \begin{subfigure}[h]{0.29\textwidth}
         \centering
         \includegraphics[width=\textwidth]{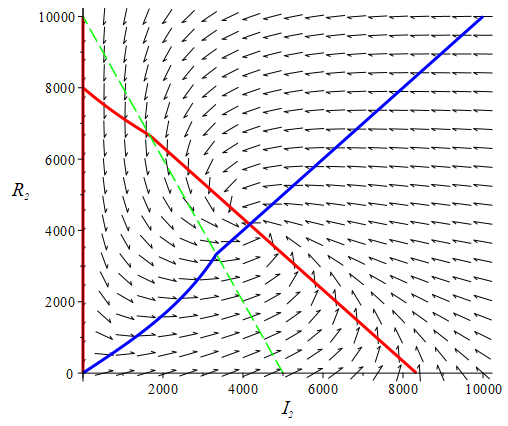}
     \end{subfigure}
     \hfill
     \begin{subfigure}[h]{0.29\textwidth}
         \centering
         \includegraphics[width=\textwidth]{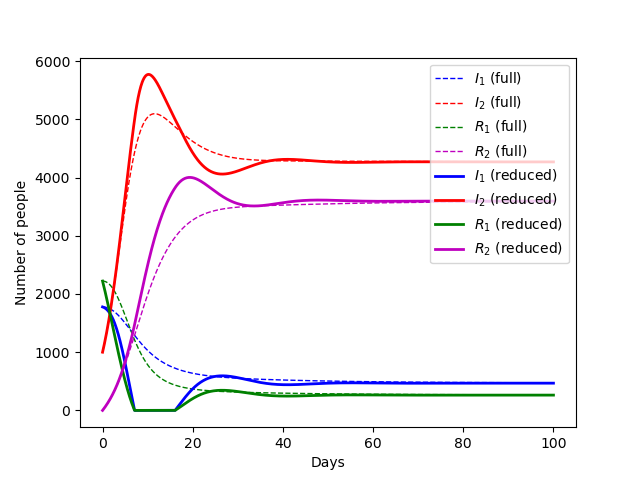}
         \caption{$\beta_1 = 0.4$, $\epsilon = 0$, $\mathscr{R}_1 = 2$, $\mathscr{R}_2 = 6$, $\mathscr{R}_{12} = 1.17$, $\mathscr{R}_{21} = 5$}
         \label{fig:coexistence}
     \end{subfigure}
     \hfill
     \begin{subfigure}[h]{0.29\textwidth}
         \centering
         \includegraphics[width=\textwidth]{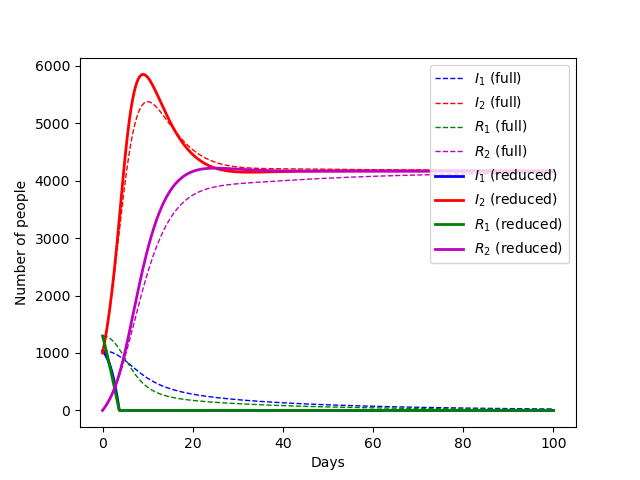}
         \caption{$\beta_1 = 0.3$, $\epsilon = 0$, $\mathscr{R}_1 = 1.5$, $\mathscr{R}_2 = 6$, $\mathscr{R}_{12} = 0.875$, $\mathscr{R}_{21} = 5.33$}
     \end{subfigure}
     \hfill
     \begin{subfigure}[h]{0.29\textwidth}
         \centering
         \includegraphics[width=\textwidth]{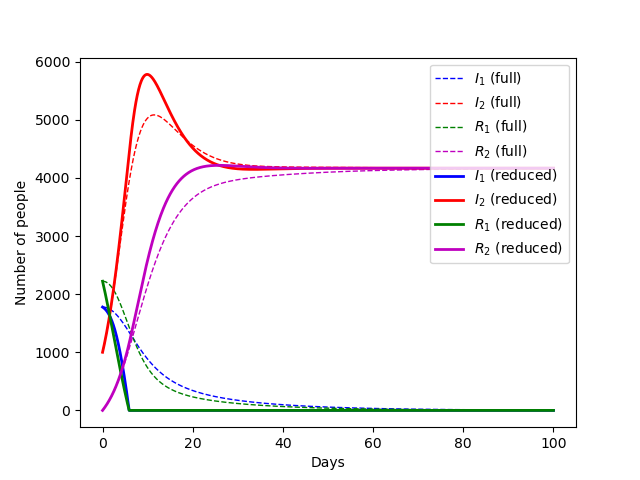}
         \caption{$\beta_1 = 0.4$, $\epsilon = 0.5$, $\mathscr{R}_1 = 2$, $\mathscr{R}_2 = 6$, $\mathscr{R}_{12} = 0.75$, $\mathscr{R}_{21} = 5$}
     \end{subfigure}
        \caption{\small In the upper row, we show vector field plots of reduced model \eqref{SIR-piecewise}, and in the lower row, we show numerical simulations for the full \eqref{SIR-partial-reduced} (dashed) and reduced \eqref{SIR-piecewise} (solid) models. We utilize the parameter values $N = 10000$, $\beta_2 = 0.6$, $\gamma_1 = 0.2$, $\gamma_2 = 0.1$, $\sigma_1 = 0.1$, $\sigma_2 = 0.1$, $I_2(0) = 1000$, and $R_2(0) = 0$. The remaining parameter values are indicated above. For the vector field plots, the $I_2$-nullcline (red), $R_2$-nullcline (blue), and switching line $I_2 + \epsilon R_2 = \displaystyle{N \left( 1 - \frac{1}{\mathscr{R}_1} \right)}$ (dashed green) are indicated. The left of the transition line is the coexistence region ($I_1 > 0$ and $I_2 > 0$) while the right is the competitive exclusion region ($I_1 = 0$ and $I_2 > 0$). Notice that the number of people infected by the original strain ($I_1(t)$) in the reduced system \eqref{SIR-piecewise} may hit zero and then become positive. This occurs in the vector field diagram when a trajectory transitions from the left of the switching line (green) to the right and then back to the left.
        }
        \label{fig:phaseplane}
\end{figure}

\subsection{Vector Field Plots and Numerical Simulations}

We demonstrate the dynamical behavior of the full model \eqref{SIR-DE-reduced} and reduced model \eqref{SIR-piecewise} for three distinct sets of endemic parameters values in Figure \ref{fig:phaseplane}.

The planar dynamics of the reduced system \eqref{SIR-piecewise} may be represented with a vector field plot (Figure \ref{fig:phaseplane}, upper row). We indicate the $I_2$-nullcline (red), $R_2$-nullcline (blue), and switching line $\displaystyle{I_2 + \epsilon R_2 = N \left( 1 - \frac{1}{\mathscr{R}_1} \right)}$ (dashed green). The endemic steady state corresponds to the intersection of the nullclines. When this intersection is to the left of the green line, we have coexistence ($I_1 > 0$ and $I_2 > 0$) while when it is to the right we have emerging strain dominant behavior ($I_1 = 0$ and $I_2 > 0$). In the lower row of Figure \ref{fig:phaseplane}, we include numerical simulations of the full system \eqref{SIR-partial-reduced} (dashed) and the reduced system \eqref{SIR-piecewise} (solid) for the same three sets of parameter values. The initial conditions $I_1(0)$ and $R_2(0)$ for the full system are chosen to be at the endemic steady state value according to \eqref{eq10}.

\subsection{Bifurcation Analysis}

In Figure \ref{fig:bifurcation}, we display selected bifurcation diagrams for the full model \eqref{SIR-partial-reduced} and reduced model \eqref{SIR-piecewise}. In the upper row, we identify four qualitatively distinct regions of behavior in parameter space: \textbf{Region I} - disease-free behavior where neither strain can infiltrate the population; \textbf{Region II} - original strain only behavior where the original strain may infiltrate the population but the emerging strain may not; \textbf{Region III} - emerging strain only behavior where the emerging strain may infiltrate the population but the original strain may not; \textbf{Region IV} - co-existence behavior where both strains may infiltrate the population. In the lower row, we represent how the multi-strain reproductive numbers $\mathscr{R}_{12}$ and $\mathscr{R}_{21}$ change as functions of other selected parameters.

\begin{figure}[t!]
     \centering
     \begin{subfigure}[b]{0.3\textwidth}
         \centering
         \includegraphics[width=\textwidth]{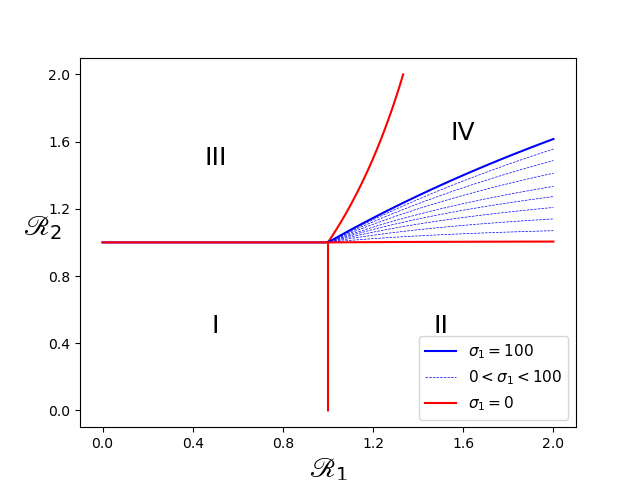}
         \caption{\small $0 \leq \beta_1 \leq 2$, $0 \leq \beta_2 \leq 2$, $\gamma_1 = 1$, $\gamma_2 = 1$, $0 \leq \sigma_1 \leq 100$, $\sigma_2 = 1$, $\epsilon = 0$}
     \end{subfigure}
     \hfill
     \begin{subfigure}[b]{0.3\textwidth}
         \centering
         \includegraphics[width=\textwidth]{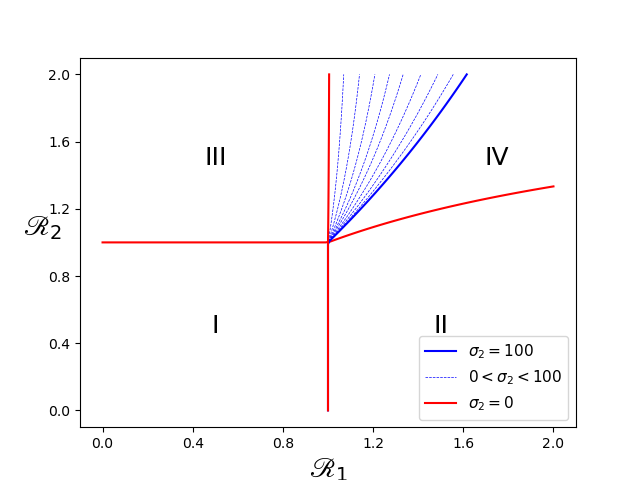}
         \caption{\small $0 \leq \beta_1 \leq 2$, $0 \leq \beta_2 \leq 2$, $\gamma_1 = 1$, $\gamma_2 = 1$, $\sigma_1 = 1$, $0 \leq \sigma_2 \leq 100$, $\epsilon = 0$}
     \end{subfigure}
     \hfill
     \begin{subfigure}[b]{0.3\textwidth}
         \centering
         \includegraphics[width=\textwidth]{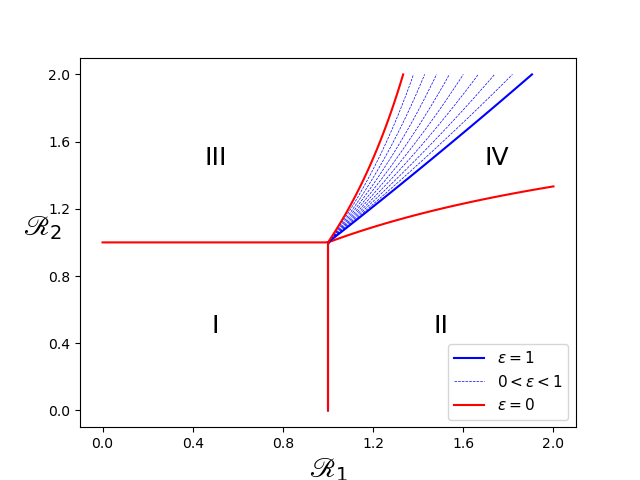}
         \caption{\small $0 \leq \beta_1 \leq 2$, $0 \leq \beta_2 \leq 2$, $\gamma_1 = 1$, $\gamma_2 = 1$, $\sigma_1 \leq 1$, $\sigma_2 = 1$, $0 \leq \epsilon \leq 1$}
     \end{subfigure}
     \hfill
     \begin{subfigure}[b]{0.3\textwidth}
         \centering
         \includegraphics[width=\textwidth]{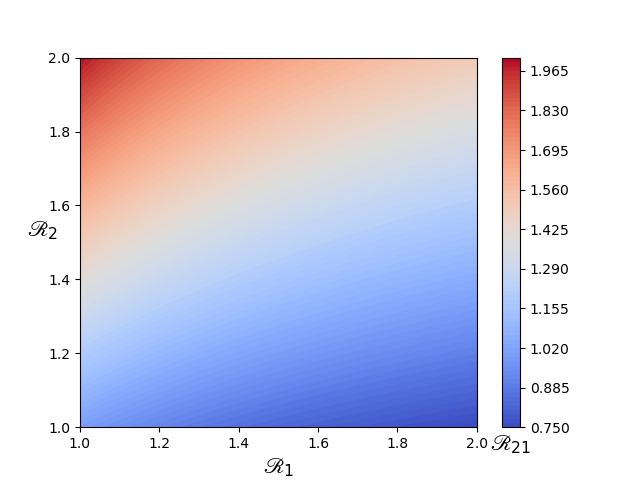}
         \caption{\small $1 \leq \beta_1 \leq 2$, $1 \leq \beta_2 \leq 2$, $\gamma_1 = 1$, $\gamma_2 = 1$, $\sigma_1 = 1$, $\sigma_2 = 1$, $\epsilon = 0.5$}
     \end{subfigure}
     \hfill
     \begin{subfigure}[b]{0.3\textwidth}
         \centering
         \includegraphics[width=\textwidth]{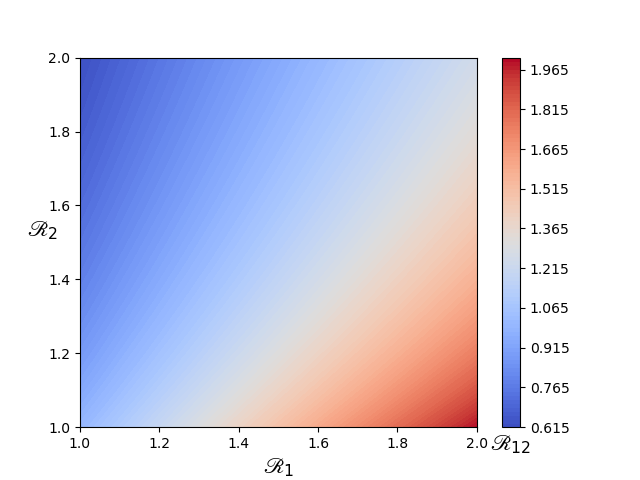}
         \caption{\small $1 \leq \beta_1 \leq 2$, $1 \leq \beta_2 \leq 2$, $\gamma_1 = 1$, $\gamma_2 = 1$, $\sigma_1 = 1$, $\sigma_2 = 1$, $\epsilon = 0.5$}
     \end{subfigure}
     \hfill
     \begin{subfigure}[b]{0.3\textwidth}
         \centering
         \includegraphics[width=\textwidth]{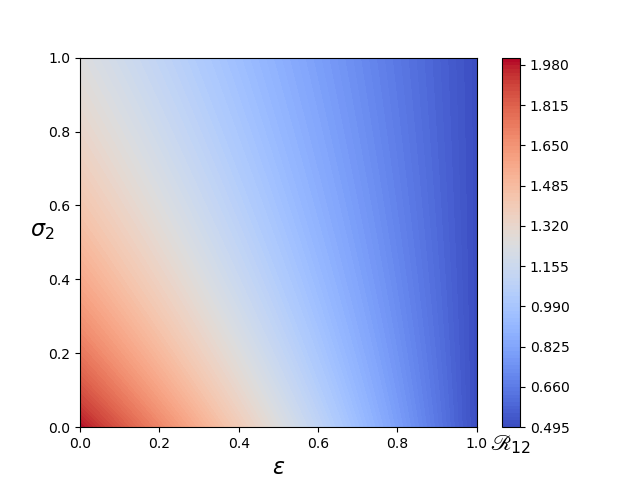}
         \caption{\small $\beta_1 = 4$, $\beta_2 = 2$, $\gamma_1 = 1$, $\gamma_2 = 1$, $\sigma_1 = 1$, $0 \leq \sigma_2 \leq 1$, $0 \leq \epsilon \leq 1$}
     \end{subfigure}
     \hfill
        \caption{\small Bifurcation diagrams for the steady states of \eqref{SIR-piecewise}. In the upper row, the parameter space is divided into four regions corresponding to four qualitatively distinct behaviors: \textbf{Region I} - disease-free behavior where neither strain can infiltrate the population; \textbf{Region II} - original strain only behavior where the original strain may infiltrate the population but the emerging strain may not; \textbf{Region III} - emerging strain only behavior where the emerging strain may infiltrate the population but the original strain may not; \textbf{Region IV} - co-existence behavior where both strains may infiltrate the population. The blue lines indicate how the regions change given changes in the indicated parameter. 
        In the lower row, we display heat maps for the value of $\mathscr{R}_{21}$ (left) and $\mathscr{R}_{12}$ (center and right) as a function of various parameters. Note that the threshold $\mathscr{R}_{21} > 1$ is required for the emerging strain to infiltrate the population and $\mathscr{R}_{12} > 1$ is required for the original strain to survive if the emerging strain infiltrates. The parameters values and ranges are shown above.}
        \label{fig:bifurcation}
\end{figure}


\begin{figure}[t!]
     \centering
     \begin{subfigure}[b]{0.3\textwidth}
         \centering
         \includegraphics[width=\textwidth]{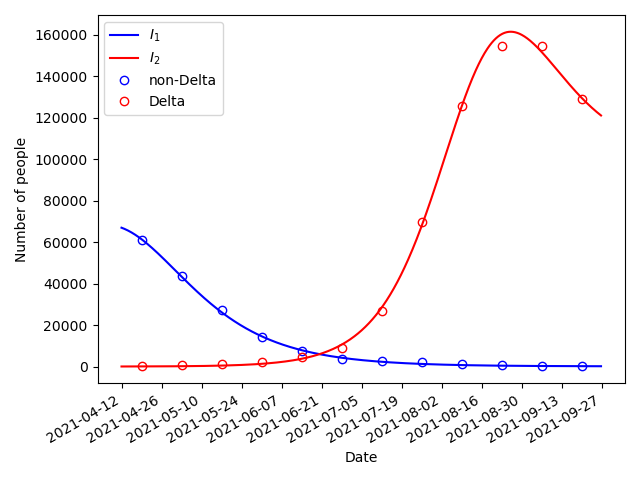}
         \caption{\small Delta (full model)}
         \label{fig:delta-full}
     \end{subfigure}
     \hfill
     \begin{subfigure}[b]{0.3\textwidth}
         \centering
         \includegraphics[width=\textwidth]{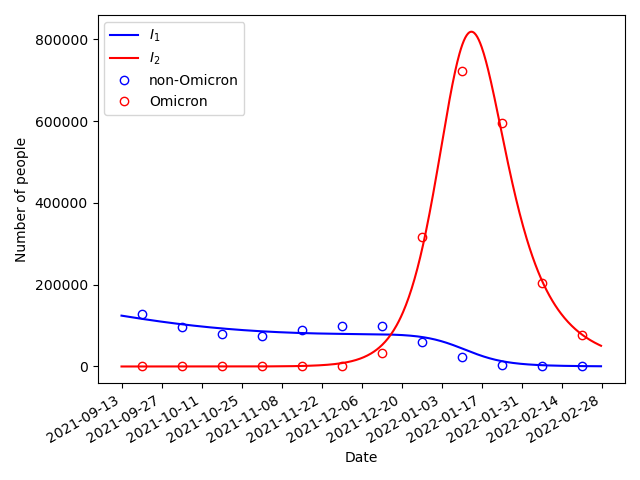}
         \caption{\small Omicron (full model)}
         \label{fig:omicron-full}
     \end{subfigure}
     \hfill
     \begin{subfigure}[b]{0.3\textwidth}
         \centering
         \includegraphics[width=\textwidth]{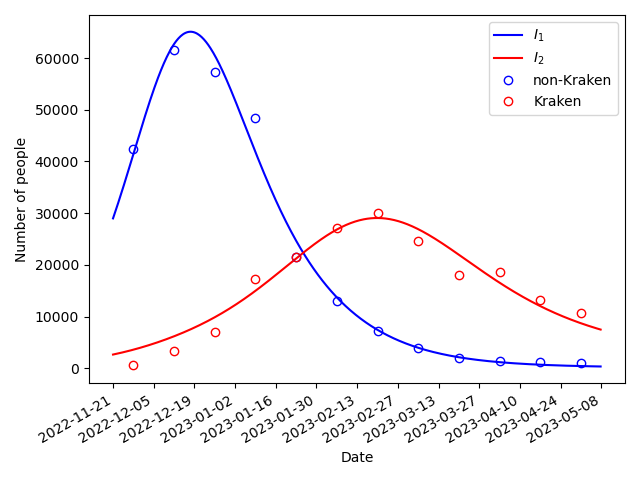}
         \caption{\small Kraken (full model)}
         \label{fig:xbb-full}
     \end{subfigure}
     \hfill
     \begin{subfigure}[b]{0.3\textwidth}
         \centering
         \includegraphics[width=\textwidth]{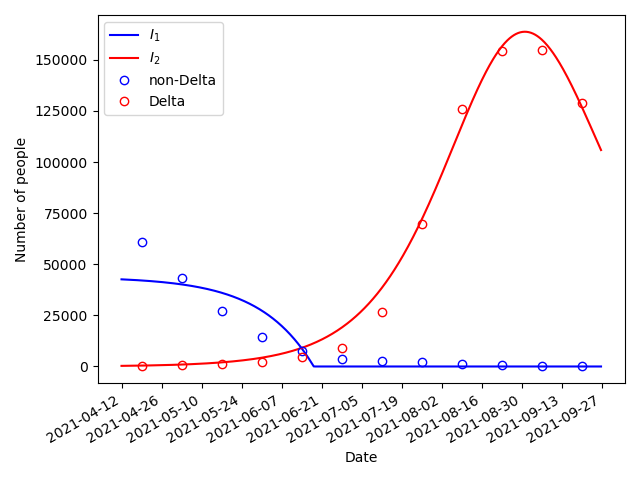}
         \caption{\small Delta (reduced model)}
         \label{fig:delta-reduced}
     \end{subfigure}
     \hfill
     \begin{subfigure}[b]{0.3\textwidth}
         \centering
         \includegraphics[width=\textwidth]{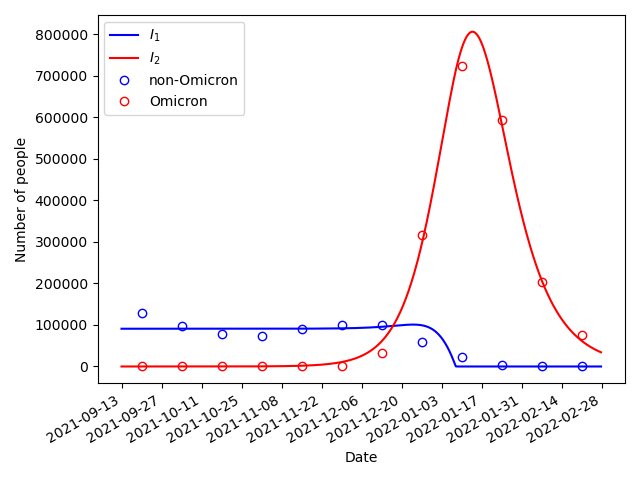}
         \caption{\small Omicron (reduced model)}
         \label{fig:omicron-reduced}
     \end{subfigure}
     \hfill
     \begin{subfigure}[b]{0.3\textwidth}
         \centering
         \includegraphics[width=\textwidth]{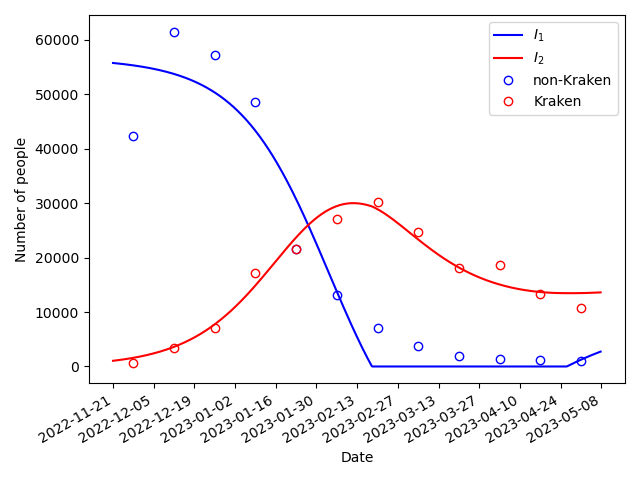}
         \caption{\small Kraken (reduced model)}
         \label{fig:xbb-reduced}
     \end{subfigure}
     \hfill
        \caption{\small Comparisons of COVID-19 case incidence data to the best-fitting full model \eqref{SIR-partial-reduced} and reduced model \eqref{SIR-piecewise}. In all figures, $I_2$ denotes the number of new cases of the variant of interest aggregated over the preceding two week period [(\textbf{(a)} \& \textbf{(d)}: Delta (B.1.617.2); \textbf{(b)} \& \textbf{(e)}: Omicron (B.1.1.529); \textbf{(c)} \& \textbf{(f)}: Omicron-subvariant Kraken (XBB.1.5)] and $I_1$ denotes the number of new cases over all other COVID-19 strains. Parameters are fit using the sum of squared error formula \eqref{error} in Python. COVID-19 case data is taken from the John Hopkins \cite{dong2023} and Our World in Data \cite{owidcoronavirus} and the variant data is taken from GISAID \cite{shu2017gisaid}. Data points are displayed at the midpoint of the two-week period during which they are aggregated. Best fitting parameters values can be found in Table \ref{table2}.}
        \label{fig:datafitting}
\end{figure}

\begin{table}[t!]
    \centering
    \small
    \begin{tabular}{c||r|r||r|r||r|r|}
    Parameters & \multicolumn{2}{c||}{Delta} & \multicolumn{2}{c||}{Omicron} & \multicolumn{2}{c}{Kraken} \\
\hline \hline
     & full \textbf{(a)} & reduced \textbf{(d)} & full \textbf{(b)} & reduced \textbf{(e)} & full \textbf{(c)} & reduced \textbf{(f)} \\
    \cline{2-7} 
    $N$ & 16336307 & 48040094 & 31579543 & 45857562 & 7575545 & 2993020 \\
    $I_1(0)$ & 188363 & 188363 & 1462667 & 813673 & 96790 & 283535 \\
    $R_1(0)$ & 308476 & 308476 & 15725210 & 12 & 0 & 100 \\
    $I_2(0)$ & 85 & 722 & 1 & 2 & 9388 & 3661 \\
    $R_2(0)$ & 85 & 0 & 0 & 0 & 0 & 3661 \\
    $\beta_1$ & 0.36685 & 0.36316 & 0.18669 & 0.29794 & 0.30167 & 0.31235 \\
    $\beta_2$ & 0.43498 & 0.41680 & 0.23336 & 0.37243 & 0.28631 & 0.30993 \\
    $\gamma_1$ & 0.35878 & 0.36162 & 0.09294 & 0.24258 & 0.23641 & 0.22252 \\
    $\gamma_2$ & 0.35878 & 0.36162 & 0.09294 & 0.24258 & 0.23641 & 0.22252 \\
    $\sigma_1$ & 0.00266 & 0.5 & 0.00676 & 0.5 & 0.01421 & 0.09208 \\
    $\sigma_2$ & 0.04600 & 0.001 & 0.001 & 0.001 & 0.00544  & 0.02175 \\
    $\epsilon$ & 0.001 & 1 & 1 & 1 & 1 & 1 \\
    $\mathscr{R}_1$ & 1.02250 & 1.00426 & 2.00875 & 1.22823 & 1.27603 & 1.40369 \\
    $\mathscr{R}_2$ & 1.21237 & 1.15260 & 2.51093 & 1.53528 & 1.21107 & 1.39284 \\
    $\mathscr{R}_{12}$ & 1.00199 & 0.87130 & 0.80000 & 0.80000 & 1.05364 & 1.00779 \\
    $\mathscr{R}_{21}$ & 1.21217 & 1.14977 & 2.42534 & 1.52273 & 1.19621 & 1.27560 \\
    \hline \hline
    \end{tabular}
    \caption{\small Best-fitting parameter values for the model simulations shown in Figure \ref{fig:datafitting}. For the full models \eqref{SIR-partial-reduced} \textbf{(a)}-\textbf{(c)}, we fit the parameters $N$, $I_1(0)$, $R_1(0)$, $I_2(0)$, $R_2(0)$, $\beta_1$, $\beta_2$, $\gamma_1$, $\gamma_2$, $\sigma_1$, $\sigma_2$, and $\epsilon$. For the reduced model \eqref{SIR-piecewise} \textbf{(d)}-\textbf{(f)}, we fit the parameters $N$, $I_2(0)$, $R_2(0)$, $\beta_1$, $\beta_2$, $\gamma_1$, $\gamma_2$, $\sigma_1$, $\sigma_2$, and $\epsilon$ and determine the values of $I_1(0)$ and $R_1(0)$ from the system of equations \eqref{eq10}. In order to focus on the potential impact of the temporary immunity periods $\sigma_1^{-1}$ and $\sigma_2^{-1}$ and the degree of cross-immunity $\epsilon$, we imposed the restrictions $0.8 \beta_1 \leq \beta_2 \leq 1.25 \beta_1$, $\gamma_1 = \gamma_2$, $\sigma_1 \leq 0.5$, and $\sigma_2 \leq 0.5$. We also impose that all parameters are greater than or equal to $0.001$.}
    \label{table2}
\end{table}

\subsection{Data Fitting Methods}

To validate the differential equation models \eqref{SIR-partial-reduced} and \eqref{SIR-piecewise}, we fit them to COVID-19 incidence and variant proportion data from the United States. COVID-19 case incidence data prior to 10/10/2022 was retrieved from the Johns Hopkins GitHub repository on May 5, 2023 \cite{dong2023} and after 10/10/2022 was retrieved from Our World in Data on June 5, 2023 \cite{owidcoronavirus}. Variant proportions data was retrieved from the Global Initiative on Sharing All Influenza Data (GISAID) on June 5, 2023 \cite{shu2017gisaid}.

We consider three separate periods of the COVID-19 pandemic during which they were a shift from the dominance of one variant to another in the United States: (1) the Delta takeover period (04/07/2021-08/16/2021); (2) the Omicron takeover period (08/30/2021-02/14/2022); and (3) the Omicron subvariant Kraken takeover period (11/21/2022-05/08/2023). In all cases, we consider the variant which is taking over as the emerging strain, $I_2$, and aggregate all remaining strains as the original strain, $I_1$. To determine variant numbers, we take the raw COVID-19 incidence data aggregated over the preceding two-week period and divide it according to the proportion of cases that are the emerging strain and which are the original strain. For optimizing the fit of the data to the model, we use the sum of squared error function
\begin{equation}
\label{error}
E(\mathbf{x}, \mathbf{y};\theta) = \sum_{i=1}^n (x_i - \hat{x}_i)^2 + \sum_{i=1}^n (y_i - \hat{y}_i)^2.
\end{equation}
The values $(x_i,y_i)$ are the cumulative clinically-confirmed COVID-19 cases over the preceding two week period for the original ($x$) and emerging ($y$) strain, respectively, in the $i^{th}$ period of data collection. The values $(\hat{x}_i,\hat{y}_i)$ are biweekly increase in new cases over the same time periods derived from the model. To simulate the model trajectories, we use the 4th order Runge-Kutta method. To fit the models to data, we use a customized stochastic Newton's method algorithm written in Python. We note that other COVID-19 studies have fit to variant proportions rather than estimated case numbers \cite{boyle2022,ciupeanu2022}.

In order to highlight the effects of the asymmetric temporary immunity periods and partial cross-immunity, we restrict the variant-specific transmission rates $\beta_i$ and recovery rates $\gamma_i$ to the limited ranges $0.8 \beta_1 \leq \beta_2 \leq 1.25 \beta_1$ and $\gamma_1 = \gamma_2$. That is, we assume that the emerging strain is no more or less than $25\%$ as transmissible as the original strain and that the two strains has the same recovery period. Otherwise, we restrict all parameters from going to zero and $\sigma_i < 0.5$. For the full model fits \eqref{SIR-partial-reduced}, we fit over the parameters $N$, $I_1(0)$, $R_1(0)$, $I_2(0)$, $R_2(0)$, $\beta_1$, $\beta_2$, $\gamma_1$, $\gamma_2$, $\sigma_1$, $\sigma_2$, and $\epsilon$. For the reduced model fits \eqref{SIR-piecewise}, we fit over the parameters $N$, $I_1(0)$, $R_1(0)$, $I_2(0)$, $R_2(0)$, $\beta_1$, $\beta_2$, $\gamma_1$, $\gamma_2$, $\sigma_1$, $\sigma_2$, and $\epsilon$. Note that we also fit the population size $N$. This may seem unusual since the overall population size of the United States is well-known. The choice of fitting $N$ was made to account for several factors which are not considered in the model: (1) removal of individuals from the susceptible population due to natural immunity, quarantine, and vaccination; and (2) the incompleteness of COVID-19 case incidence data due to reporting errors and the prevalence of at-home self-test kits. The value $N$ can be more fairly interpreted as the effective population level after accounting for the aggregate effects of removal from susceptibility and underreporting.

The best fitting model simulations are shown in Figure \ref{fig:datafitting} and the corresponding best fitting parameters can be found in Table \ref{table2}. The data circle corresponds to the average clinically-confirmed daily incidence of COVID-19 over the surrounding two week period. For ease of visualization, the data circle is displayed at the midpoint of the two-week data collection period.

\section{Discussion}

In this section, we analyze the results of the numerical studies carried out in Section \ref{sec:numerical}.


\subsection{Full and Reduced Model Comparison}

Consider the numerical simulations of the full model \eqref{SIR-partial-reduced} and reduced model \eqref{SIR-piecewise} (lower row of Figure \ref{fig:phaseplane}). Note that there is no significant time-scale separation between the dynamics of the original and emerging strain so that the assumptions underlying the QSSA are not met. Consequently, we should not expect that trajectories of \eqref{SIR-partial-reduced} converge to those of \eqref{SIR-piecewise} in any limiting way. Nevertheless, our numerical study suggests that trajectories of the full and reduced systems agree well when the full system is taken to start near the endemic steady state of the original strain. After a short transient period, trajectories of both systems settle asymptotically to the same steady state. Since the long-term dynamics agree, we conjecture that the global stability of the steady states guaranteed by Theorem \ref{theorem3} holds not only for the reduced system \eqref{SIR-piecewise} but also for the full system \eqref{SIR-partial-reduced}.

We now consider the performance of the full \eqref{SIR-partial-reduced} and reduced \eqref{SIR-piecewise} models when fit with COVID-19 case incidence data (see Figure \ref{fig:datafitting}).
\begin{enumerate}
\item
During the Delta takeover period (Figure \ref{fig:datafitting}\textbf{(a)} and \textbf{(d)}), the full model fits the declining trend of the original strain ($I_1$) noticeably better than the reduced model. We also note a disparity in the survival prediction for the original strains ($\mathscr{R}_{12} > 1$ for full model and $\mathscr{R}_{12} < 1$ for the reduced model). This disparity can be attributed to a violation of the assumption underlying the derivation of the reduced model, which was that the original strain has reached its endemic steady state before the infiltration of the emerging strain.
\item
During the Omicron takeover period (Figure \ref{fig:datafitting}\textbf{(b)} and \textbf{(e)}), the two models fit comparably well. In particular, the two models consistently predict that Omicron was more transmissible than previous strains ($\mathscr{R}_2 > \mathscr{R}_1$), that Omicron would infiltrate the population ($\mathscr{R}_{21} > 1$), and that the previous strains would not survive ($\mathscr{R}_{12} < 1$).
\item
During the Kraken takeover period (Figure \ref{fig:datafitting}\textbf{(c)} and \textbf{(f)}), the two models fit the emerging strain ($I_2$) comparably well, while the reduced model fits the pre-takeover phase of the original strain ($I_1$) better and the full model fits the post-takeover phase of the original strain better. In particular, the reduced model better captures the steadiness of the original strain pre-takeover but overshoots its decline as the emerging strain enters the population. Both models, however, consistently predict the infiltration of the Kraken ($\mathscr{R}_{21} > 1$) and survival of the original strain ($\mathscr{R}_{12} >1$).
\end{enumerate}
Overall, the reduced model \eqref{SIR-piecewise} performs well when fit with COVID-19 case incidence data. In particular, it fits the dynamics of the emerging strain ($I_2$) comparably well to the full mode. Care should be taken, however, to ensure the model assumption that the prevalence of existing strains is steady prior to the emergence of the new strain is met.

\subsection{Effect of Temporary Immunity and Partial Cross-Immunity}

Now consider the bifurcation plots in Figure \ref{fig:bifurcation}. The plots \textbf{(b)} and \textbf{(c)} show how changes in the loss of temporary immunity rates, $\sigma_1$ and $\sigma_2$, influence the boundaries of the four qualitatively distinct regions of parameter space. Note that as the temporary immunity periods become shorter (i.e. $\sigma_i \to \infty$, $\sigma_i^{-1} \to 0$), the co-existence region becomes smaller and the region for one-strain dominant behavior grows. This decrease in the capacity of coexistence as the temporary immunity periods decrease can be intuitively attributed to the increased amount of time where the two strains are competing for infections from a common susceptible class ($S$) rather than asymmetric pools of susceptibles ($S+R_1$ or $S+R_2$). In this case, it is more likely that the strictly more contagious strain will survive ($\mathscr{R}_1 > \mathscr{R}_2$ or $\mathscr{R}_2 > \mathscr{R}_1$). This analysis suggests that asymmetric temporary immunity periods is an important factor in the long-term survival of individual strains of COVID-19. Specifically, it is important for individual strains to be able to infect those recently infected with other strains to gain an advantage over those strains. 

The plot \textbf{(c)} shows how changes in the degree of cross-immunity from the emerging strain to the original strain, $\epsilon$, influences the boundaries of the regions. We can see that as the degree of cross-immunity increases ($\epsilon \to 1$), the region for coexistence shrinks dramatically. In this case, the original strain needs to have a strictly higher basic reproduction number in order to survive ($\mathscr{R}_1 > \mathscr{R}_2$). This can be intuitively interpreted as resulting from that reduction in individuals susceptible to the original strain ($S$) relative to the emerging strain ($S+R_1$). This suggests that even partial cross-immunity from the emerging strain to the original strain can contribute a significant competitive advantage to the emerging strain. Notably, an emerging strain of COVID-19 need not be significantly more contagious than an original strain (i.e. $\mathscr{R}_2 \gg \mathscr{R}_1$) in order for the new strain to eliminate the original strain from the population.

The plot \textbf{(d)} displays how the basic reproduction number for the emerging strain in the presence of the original strain $\mathscr{R}_{21}$ changes as a function of the strain-specific basic reproduction numbers $\mathscr{R}_1$ and $\mathscr{R}_2$. Having a more contagious emerging strain ($\mathscr{R}_2$ high) and a less contagious original strain ($\mathscr{R}_1$ low) makes it more likely for emerging strain to be able to infiltrate the population ($\mathscr{R}_{21} > 1$). The plots \textbf{(e)} and \textbf{(f)} display how the basic reproduction number for the original strain in the presence of the emerging strain $\mathscr{R}_{12}$ changes as a function of $\mathscr{R}_1$, $\mathscr{R}_2$, $\sigma_2$, and $\epsilon$. A more contagious original strain ($\mathscr{R}_1$ high), less contagious emerging strain ($\mathscr{R}_2$ low), long temporary immunity period for the emerging strain ($\sigma_2^{-1}$ high), or low degree of cross-immunity ($\epsilon$ low) makes it more likely for the original strain to survive when the emerging strain infiltrates the population ($\mathscr{R}_{12} > 1$). The analysis suggests that the ability of COVID-19 strains to exclusively infect those recently infected by emerging strains is crucial for their long-term survival.

\subsection{Implications for the spread of COVID-19}



Notice that the parameter-fit strain-specific basic reproduction numbers $\mathscr{R}_1$ and $\mathscr{R}_2$ are not significantly different in any of the models (see Table \ref{table2}). In fact, for the Kraken the models predict that the emerging strain is slightly less transmissible than the original strain ($\mathscr{R}_1 > \mathscr{R}_2$). This suggests that factors other than differences in transmissibility can be a driving force for the takeover of an emerging strain. For cases \textbf{(b)}-\textbf{(f)} in Figure \ref{fig:datafitting} we have that the best fitting parameters include $\epsilon = 1$ while for case \textbf{(a)} we have $\sigma_1^{-1} > \sigma_2^{-1}$. This suggests that the emerging strain can gain a competitive advantage by increasing its cross-immunity or shortening its temporary immunity period. In these cases, even though the two strains have comparable strain-specific reproduction numbers, the emerging strain is able to infiltrate the already-infected population and drive the original strain to extinction or near-extinction.

For both the full and the reduced models, the data fitting suggests that the highest capacity of strain-over-strain fitness is for Omicron over previous strains ($\mathscr{R}_{21} = 2.42534$ for full model and $\mathscr{R}_{21} = 1.52273$ for reduced model). This is consistent with the data trend which showed Omicron accounting for over $99\%$ of COVID-19 cases in the United States within three months of first being detected \cite{shu2017gisaid}. The data fitting also suggests that coexistence is possible with the Kraken strain and previous strains ($\mathscr{R}_{12} = 1.03467 > 1$ for full model and $\mathscr{R}_{21} = 1.10030 >1$ for reduced model). Even though we have seen the prevalence of Kraken increase dramatically over the indicated time period, the model fits suggests the ancestral strains will survive rather than die off like previous takeovers. This is consistent with the observation that the Kraken variant has comprised approximate $90\%$ of COVID-19 cases during post-takeover period of 03/27/2023-06/05/2023 \cite{shu2017gisaid}. This suggests that we are closer to having the long-term reality of co-circulating COVID-19 strains. Such a situation requires an adjustment of public health strategies to predict the most effective strain-specific vaccines, as is currently conducted with seasonal influenza.

\section{Conclusions and Future Work}

We have introduced a two-strain model for infectious disease spread which incorporates asymmetric temporary immunity periods and partial cross-immunity. We have derived conditions (Theorem \ref{theorem1}) on the temporary immunity periods, degree of cross-immunity, and basic reproduction numbers under which the strains can coexist. We have also reduced the model to a planar hybrid switching system and analysed the dynamics using linear stability analysis, phase plane analysis, and the Bendixson-Dulac criterion. We have parameter fit the full model \eqref{SIR-partial-reduced} and reduced model \eqref{SIR-piecewise} to COVID-19 case incidence data from the United States for three different strains. This analysis has demonstrated the capacity of differences in temporary immunity periods and partial cross-immunity to account for the observed changes in variant proportions over time, and in particular the phenomenon of one variant infiltrating a population and eliminating previous variants.

The work conducted in this paper suggests several fruitful opportunities for future work:
\begin{enumerate}
    \item 
    \emph{Incorporating vaccination.} The full and reduced models \eqref{SIR-partial-reduced} and \eqref{SIR-piecewise} do not incorporate vaccination. Data has suggested, however, that vaccination has played a significant role in altering the spread of COVID-19 \cite{johnston2022}. Future work will incorporate vaccination in the models to the impact of asymmetries in vaccine-resistance between strains in promoting competitive exclusion or coexistence.
    \item 
    \emph{Extending to more strains.} The analysis in this paper has focused on the two-strain models \eqref{SIR-partial-reduced} and \eqref{SIR-piecewise}. Throughout the COVID-19 pandemic, however, we have periods with more than two circulating strains, and the impact of multiple circulating strains is expected to become more pronounced as COVID-19 becomes endemic in the global population. Extending the model set-up, bifurcation analysis, and reduction methods to models with more than two strains will be the focus of future work.
    \item 
    \emph{Extending dynamical results to the full model \eqref{SIR-partial-reduced} and the reduced model \eqref{SIR-piecewise} for $0 \leq \epsilon \leq 1$.} It is suspected that the conclusions of Theorem \ref{theorem3} hold for the reduced model \eqref{SIR-DE-reduced} for all values of $0 \leq \epsilon \leq 1$, rather than just the special cases $\epsilon = 0$ and $\epsilon = 1$, and also the full model \eqref{SIR-partial-reduced}. These results, however, remain unproved. A sufficient condition for Theorem \ref{theorem3} to hold for the reduced model \eqref{SIR-piecewise} would be
    \[\frac{\partial \omega}{\partial R_2} R + \omega(I_2,R_2) \geq 0\]
    for $\displaystyle{0 < I_2 + \epsilon R_2 < N \left( 1 - \frac{1}{\mathscr{R}_1} \right)}$. Future work will aim to complete this analysis.
    \item
    \emph{Conducting asymptotic analysis for reduction of full model \eqref{SIR-partial-reduced} to reduced model \eqref{SIR-piecewise}.} Although model simulations agree well in the long-term dynamics (see Figure \ref{fig:phaseplane}), evaluating the convergence of trajectories in the transient dynamics is made challenging by the cross-compartmental infection terms. Determining parameter regions where trajectory convergence is guaranteed will be a focus of future research.
    \item
    \emph{Incorporating births and deaths.} Since we are considering strains with emerge within a population with a circulating endemic strain, it is natural to incorporate long-term population effects such as births and deaths. This will be the focus of future research.
\end{enumerate}

\paragraph*{Funding}Work on this project was supported by NSF Grant DMS-2213390.

\paragraph*{Acknowledgments}We gratefully acknowledge all data contributors, i.e., the Authors and their Originating laboratories responsible for obtaining the specimens, and their Submitting laboratories for generating the genetic sequence and metadata and sharing via the GISAID Initiative, on which this research is based.

\paragraph*{Conflicts of interest}The authors declare no conflict of interest.


\appendix

\section{Next Generation Method}
\label{app:ngm}

Following \cite{VANDENDRIESSCHE2002}, we define the state vector $\mathbf{x} \in \mathbb{R}_{\geq 0}^n$ and reindex so that $i= 1, \ldots, m$, $m \leq n$, corresponds to the infectious compartments. For $i=1, \ldots, m,$ let $\mathscr{F}_i(\mathbf{x})$ denote the rate of new infections in compartment $i$, $\mathscr{V}^+_i(\mathbf{x})$ denote the rate of inflow into compartment $i$ by any other means, and $\mathscr{V}^-_i(\mathbf{x})$ denote the rate of outflow from compartment $i$ by any other means. The rate of change in each infectious compartment $i = 1, \ldots, m,$ can then be given by
\[\frac{dx_i}{dt} = \mathscr{F}_i - \mathscr{V}_i\]
where $\mathscr{V}_i = \mathscr{V}^-_i - \mathscr{V}^+_i$. Let
\[F = \left[ \frac{d\mathscr{F}_i}{dx_j} \right], \; i, j = 1, \ldots, m \]
and
\[V = \left[ \frac{d\mathscr{V}_i}{dx_j} \right], \; i, j = 1, \ldots, m \]
denote the Jacobians of $\mathscr{F}$ and $\mathscr{V}$, respectively, restricted to only infectious compartments. The next generation matrix is given by $FV^{-1}$. If $\mathbf{x}_0$ is the disease free steady state, the basic reproduction number is given by $\mathscr{R}_0 = \rho(FV^{-1}(\mathbf{x}_0))$. If $FV^{-1}$ is reducible with $n$ irreducible components, we can decouple the system into strain specific submatrices $F_i V_i^{-1}$. We can then compute the basic reproduction number of strain $i$ by $\mathscr{R}_i = \rho(F_i V_i^{-1}(\mathbf{x}_0))$ and derive $\displaystyle{\mathscr{R}_0 = \max_{i = 1, \ldots, m } \{ \mathscr{R}_i \}}$. Similarly, if $\mathbf{x}_j$ is the strain $j$ only steady state, then the basic reproduction number of strain $i$ in the presence of strain $j$ is given by $\mathscr{R}_{ij} = \rho(F_iV_i^{-1}(\mathbf{x}_j))$.

\subsection{Basic two-strain model}
\label{app:basic}

For the basic two-strain model \eqref{SIR}, we have the following:
\[\mathscr{F} = \left[ \begin{array}{c}
\displaystyle{\frac{\beta_1}{N} S I_1} \\[0.1in]
\displaystyle{\frac{\beta_2}{N} S I_2}
\end{array} \right] \mbox{ and } \mathscr{V} = \left[ \begin{array}{c}
\gamma_1 I_1 \\[0.1in]
\gamma_2 I_2 
\end{array} \right] \]
so that
\[F = \left[ \begin{array}{cc}
\displaystyle{\frac{\beta_1}{N} S} & 0 \\
0 & \displaystyle{\frac{\beta_2}{N} S}
\end{array} \right] \mbox{ and } V = \left[ \begin{array}{cc}
\gamma_1 & 0 \\
0 & \gamma_2
\end{array} \right].\]
It follows that
\[FV^{-1} = \left[ \begin{array}{cc}
\displaystyle{\frac{\beta_1}{\gamma_1 N} S} & 0 \\
0 & \displaystyle{\frac{\beta_2}{\gamma_2 N} S}
\end{array} \right], F_1V_1^{-1} = \displaystyle{\frac{\beta_1}{\gamma_1 N} S} , \mbox{ and } F_2V_2^{-1} =\displaystyle{\frac{\beta_2}{\gamma_2 N} S}\]
where we have removed the matrix representation for the $1 \times 1$ matrices for notational simplicity. Substituting in the disease free steady state $\mathbf{x}_0$ \eqref{dfe} gives
\[FV^{-1}(\mathbf{x}_0) = \left[ \begin{array}{cc}
\displaystyle{\frac{\beta_1}{\gamma_1} } & 0 \\
0 & \displaystyle{\frac{\beta_2}{\gamma_2}}
\end{array} \right], F_1V_1^{-1}(\mathbf{x}_0) = \displaystyle{\frac{\beta_1}{\gamma_1} }, \mbox{ and } F_2V_2^{-1}(\mathbf{x}_0) =\displaystyle{\frac{\beta_2}{\gamma_2}}. \]
It follows that $\mathscr{R}_1 = \beta_1/\gamma_1$, $\mathscr{R}_2 = \beta_2/\gamma_2$, and $\mathscr{R}_0 = \max \{ \mathscr{R}_1, \mathscr{R}_2 \}$. Furthermore, substituting in the original strain only steady state $\mathbf{x}_1$ \eqref{ose} and emerging strain only steady state $\mathbf{x}_2$ \eqref{ese} into the relevant components gives
\[F_1V_1^{-1}(\mathbf{x}_2) = \displaystyle{\frac{\beta_1 \gamma_2}{\beta_2 \gamma_1} }, \mbox{ and } F_2V_2^{-1}(\mathbf{x}_1) =\displaystyle{\frac{\beta_2 \gamma_1}{\beta_1 \gamma_2}}.\]
It follows that $\mathscr{R}_{12} = \mathscr{R}_1 / \mathscr{R}_2$ and $\mathscr{R}_{21} = \mathscr{R}_2 / \mathscr{R}_1$ and we are done.

\subsection{Two-strain model with asymmetric temporary immunity periods and partial cross-immunity}
\label{app:new}

For the two-strain model with asymmetric temporary immunity periods and partial cross-immunity \eqref{SIR2}, we have the following:
\[\mathscr{F} = \left[ \begin{array}{c}
\displaystyle{\frac{\beta_1}{N} (S + (1- \epsilon)R_2) I_1} \\[0.1in]
\displaystyle{\frac{\beta_2}{N} (S + R_1) I_2}
\end{array} \right] \mbox{ and } \mathscr{V} = \left[ \begin{array}{c}
\gamma_1 I_1 \\[0.1in]
\gamma_2 I_2 
\end{array} \right] \]
so that
\[F = \left[ \begin{array}{cc}
\displaystyle{\frac{\beta_1}{N} (S + (1-\epsilon)R_2)} & 0 \\
0 & \displaystyle{\frac{\beta_2}{N} (S + R_1)}
\end{array} \right] \mbox{ and } V = \left[ \begin{array}{cc}
\gamma_1 & 0 \\
0 & \gamma_2
\end{array} \right].\]
It follows that
\[\small FV^{-1} = \left[ \begin{array}{cc}
\displaystyle{\frac{\beta_1}{\gamma_1 N} (S + (1-\epsilon)R_2)} & 0 \\
0 & \displaystyle{\frac{\beta_2}{\gamma_2 N} (S+R_1)}
\end{array} \right], F_1V_1^{-1} = \displaystyle{\frac{\beta_1}{\gamma_1 N} (S + (1-\epsilon)R_2)} , \mbox{ and } F_2V_2^{-1} =\displaystyle{\frac{\beta_2}{\gamma_2 N} (S+R_1)}. \]
Substituting in the disease free steady state $\mathbf{x}_0$ \eqref{dfe} gives
\[FV^{-1}(\mathbf{x}_0) = \left[ \begin{array}{cc}
\displaystyle{\frac{\beta_1}{\gamma_1} } & 0 \\
0 & \displaystyle{\frac{\beta_2}{\gamma_2}}
\end{array} \right], F_1V_1^{-1}(\mathbf{x}_0) = \displaystyle{\frac{\beta_1}{\gamma_1} }, \mbox{ and } F_2V_2^{-1}(\mathbf{x}_0) =\displaystyle{\frac{\beta_2}{\gamma_2}}. \]
It follows that $\mathscr{R}_1 = \beta_1/\gamma_1$, $\mathscr{R}_2 = \beta_2/\gamma_2$, and $\mathscr{R}_0 = \max \{ \mathscr{R}_1, \mathscr{R}_2 \}$. Now, substituting in the original strain only steady state $\mathbf{x}_1$ \eqref{ose} and emerging strain only steady state $\mathbf{x}_2$ \eqref{ese} into the relevant components gives
\[F_1V_1^{-1}(\mathbf{x}_2) = \displaystyle{\frac{\beta_1 \gamma_2((1-\epsilon)\beta_2 + \epsilon \gamma_2 + \sigma_2)}{\beta_2 \gamma_1 ( \gamma_2 + \sigma_2 )} }, \mbox{ and } F_2V_2^{-1}(\mathbf{x}_1) =\displaystyle{\frac{\beta_2 \gamma_1(\beta_1 + \sigma_1)}{\beta_1 \gamma_2( \gamma_1 + \sigma_1)}}.\]
It follows that $\mathscr{R}_{12} = \displaystyle{\frac{\mathscr{R}_1}{\mathscr{R}_2} \left( \frac{(1-\epsilon)\beta_2 + \epsilon \gamma_2 + \sigma_2}{\gamma_2 + \sigma_2} \right)}$ and $\mathscr{R}_{21} = \displaystyle{ \frac{\mathscr{R}_2}{\mathscr{R}_1} \left(\frac{\beta_1 + \sigma_1}{\gamma_1 + \sigma_1}\right)}$ and we are done.

\section{Proof of Theorem \ref{thm-coexistence}}
\label{app:a}

\begin{proof}
We want to prove that a co-existence steady state (i.e. $I_1 >0$ and $I_2 > 0$) exists if and only if $\min \{ \mathscr{R}_1, \mathscr{R}_2, \mathscr{R}_{12}, \mathscr{R}_{21} \} > 1$ is satisfied. 

We consider the steady state equations for \eqref{SIR-partial-reduced}:
\begin{equation}
    \label{steadystate}
    \left\{ \; \; \;
    \begin{aligned}
\frac{\beta_1}{N} (N - I_1 - R_1 - I_2 - \epsilon R_2) I_1 - \gamma_1 I_1 &  = 0\\
\gamma_1 I_1 - \sigma_1 R_1 - \frac{\beta_2}{N} R_1 I_2 & = 0\\
\frac{\beta_2}{N} (N - I_1 - I_2 - R_2) I - \gamma_2 I_2 & = 0\\
\gamma_2 I_2 - \sigma_2 R_2 - \frac{\beta_1}{N} (1 - \epsilon) R I_1 & = 0
    \end{aligned}
    \right.
\end{equation}
We can solve the latter two equations in \eqref{steadystate} to get
\begin{equation}
\label{h1}
\begin{aligned}
I_2 & = \frac{(N(\beta_2 - \gamma_2) - \beta_2 I_1)(\beta_1 ( 1 - \epsilon) I_1 + N \sigma_2)}{\beta_2 (\beta_1 ( 1 - \epsilon) I_1 + N(\gamma_2 + \sigma_2))} =: \phi(I_1),\\
R_2 & =  \frac{N \gamma_2 (N(\beta_2 - \gamma_2) - \beta_2 I_1)}{\beta_2 (\beta_1 ( 1 - \epsilon) I_1 + N(\gamma_2 + \sigma_2))} =: \psi(I_1)
\end{aligned}
\end{equation}
We note that this steady state only satisfies $I_1 > 0$ and $R_1 > 0$ if $\displaystyle{I_1 <  N \left(1 - \frac{1}{\mathscr{R}_2}\right)}.$

We now substitute the equations $\phi(I_1)$ and $\psi(I_1)$ from \eqref{h1} into the first two equations of \eqref{steadystate}. This gives
\begin{equation}
    \label{steadystate-partial}
    \left\{ \; \; \;
    \begin{aligned}
\frac{\beta_1}{N} (N - I_1 - \phi(I_1) - R_1 - \epsilon \psi(I_1)) I_1 - \gamma_1 I_1 & = 0\\
\gamma_1 I_1 - \sigma_1 R_1 - \frac{\beta_2}{N} R_1 \phi(I_1) & = 0.
    \end{aligned}
    \right.
\end{equation}
In order to show whether these two equations can be satisfied for $I_1 > 0$, we solve for $R_1$ in \eqref{steadystate-partial} in terms of $I_1$ and compute the corresponding derivatives. We have the following:
\begin{equation}
    \label{steadystate-null}
    \left\{ \; \; \;
    \begin{aligned}
        R_1 & = N - I_1 - \phi(I_1) -\epsilon \psi(I_1) - \frac{N \gamma_1}{\beta_1} =: f(I_1) \\
        R_1 & = \frac{N \gamma_1 I_1}{N \sigma_1 + \beta_2 \phi(I_1)} =: g(I_1)
    \end{aligned}
    \right.
\end{equation}
Computing and simplifying the derivatives gives the following:
\tiny
\[
    \left\{ \; \; \;
    \begin{aligned}
        f'(I_1) & =  -\frac{ N^2\gamma_2 (1 - \epsilon)((1 - \epsilon)(\beta_2 - \gamma_2) \beta_1 + \beta_2 (\gamma_2 + \sigma_2))}{\beta_2 (\beta_1 I_1 (1 - \epsilon) + N (\gamma_2 + \sigma_2))^2}\\
        g'(I_1) & = \frac{N \gamma_1 ([\beta_2 (I_1^2(1 - \epsilon)^2 \beta_1^2 + 2N I_1 \sigma_2 (1 - \epsilon) \beta_1 + N^2 \sigma_2 (\gamma_2 + \sigma_2))] \phi(I_1) +( \beta_1I_1(1 - \epsilon) + N \sigma_2)(\beta_2 \beta_1(1 - \epsilon)I_1^2 + (\sigma_1 (1 - \epsilon) \beta_1 + \beta_2 \sigma_2)N I_1 + N^2 \sigma_1(\gamma_2 + \sigma_2)) )}{(N \sigma_1 + \beta_2 \phi(I_1))^2}.
    \end{aligned}
    \right.
\]
\normalsize
Since $1-\epsilon \geq 0$, $\beta_2 > \gamma_2$, and $\phi(I_1) \geq 0$, we have that $f'(I_1) < 0$ and $g'(I_1) > 0$.

We observe that $g(0) = 0$. 
In order to have a solution in the region $I_1 > 0$ and $R_1 > 0$, it is necessary that $f(0) > 0$, which corresponds to
\[\displaystyle{\mathscr{R}_1  > \mathscr{R}_2 \left( \frac{\gamma_2 + \sigma_2 }{(1-\epsilon)\beta_2 + \epsilon \gamma_2  + \sigma_2} \right)} \Longrightarrow \displaystyle{\frac{\mathscr{R}_1}{\mathscr{R}_2} \left( \frac{(1-\epsilon)\beta_2 + \epsilon \gamma_2  + \sigma_2}{\gamma_2 + \sigma_2 } \right)} > 1. \]
We now check where that solution can be. We will have a solution in the region $\displaystyle{0 < I_1 < N \left( 1 - \frac{1}{\mathscr{R}_2} \right) =: I_1^*}$, corresponding to $I_2 > 0$, if and only if $g(I_1^*)> f(I_1^*)$. Substituting $I_1 = I_1^*$ into \eqref{steadystate-null} gives the condition
\[\displaystyle{\mathscr{R}_2 > \mathscr{R}_1\left( \frac{\gamma_1 + \sigma_1}{\beta_1 + \sigma_1} \right) } \Longrightarrow \displaystyle{\frac{\mathscr{R}_2}{\mathscr{R}_1} \left( \frac{\beta_1 + \sigma_1}{\gamma_1 + \sigma_1} \right) } > 1 .\]
It follows that a solution to \eqref{steadystate-partial} with $I_2 > 0$ and $I_1 > 0$ if and only if $\min \{ \mathscr{R}_1, \mathscr{R}_2, \mathscr{R}_{12}, \mathscr{R}_{21} \} > 1$ is satisfied.


\end{proof}

\section{Proof of Theorem \ref{theorem3}}
\label{app:b}

We start our analysis of \eqref{SIR-piecewise} by deriving a few properties of the original strain steady state function $\omega(I_2,R_2)$ \eqref{omega}. We have the following Lemma.

\begin{lem}
\label{lemma3}
The original strain steady state function $\omega(I_2,R_2)$ \eqref{omega} satisfies the following properties:
\begin{enumerate}
\item
$\displaystyle{\frac{\partial \omega}{\partial I_2} = \left\{
    \begin{array}{rlrl}
        & \displaystyle{N \gamma_1\left( \frac{1 + \displaystyle{\frac{ \beta_2 \omega(I_2,R_2)}{\beta_2 I_2 + N \sigma_1}}}{ \beta_2 I_2 + N (\gamma_1 + \sigma_1)}\right)-1,} & & \mbox{   if } I_2 + \epsilon R_2 < N \displaystyle{\left(1 - \frac{1}{\mathscr{R}_1}\right)} \\
        & 0, & & \mbox{   if } I_2 + \epsilon R_2 > N \displaystyle{\left(1 - \frac{1}{\mathscr{R}_1}\right)}
    \end{array}
    \right.}$
\item
$\displaystyle{\frac{\partial \omega}{\partial R_2} = \left\{
    \begin{array}{rlrl}
        & \displaystyle{-\epsilon \left( \frac{\beta_2 I_2 + N \sigma_1}{\beta_2 I_2 + N (\gamma_1 + \sigma_1)} \right)}, 
        & & \mbox{   if } I_2 + \epsilon R_2 < N \displaystyle{\left(1 - \frac{1}{\mathscr{R}_1}\right)} \\
        & 0, & & \mbox{   if } I_2 + \epsilon R_2 > N \displaystyle{\left(1 - \frac{1}{\mathscr{R}_1}\right)}
    \end{array}
    \right.}$
\item
\emph{(a)} $\displaystyle{\frac{\partial \omega}{\partial I_2} + 1 > 0}$; and \emph{(b)} $\displaystyle{\frac{\partial \omega}{\partial R_2} + 1 > 0}$.
\end{enumerate}
\end{lem}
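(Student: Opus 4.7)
The plan is to prove each part by direct computation, splitting into two cases according to the piecewise definition of $\omega$ in \eqref{omega}. The case $I_2 + \epsilon R_2 > N(1 - 1/\mathscr{R}_1)$ is trivial for all three claims: $\omega \equiv 0$ there, so both partial derivatives vanish and the inequalities in part (3) reduce to $1 > 0$. All real content lies in the region $I_2 + \epsilon R_2 < N(1 - 1/\mathscr{R}_1)$, where $\omega > 0$.

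For parts (1) and (2), rather than applying the quotient rule to the closed form for $\omega$ and wrestling with the resulting expression, I would use implicit differentiation. Abbreviating $B := \beta_2 I_2 + N\sigma_1$, the defining identity from \eqref{eq10} simplifies (after substituting the steady-state formula for $R_1$) to
\begin{equation*}
\omega\,(B + N\gamma_1) \;=\; B\left(\tfrac{N(\beta_1 - \gamma_1)}{\beta_1} - I_2 - \epsilon R_2\right).
\end{equation*}
Differentiating with respect to $I_2$ (using $\partial B/\partial I_2 = \beta_2$) and solving linearly for $\omega_{I_2}$ gives $\omega_{I_2} = (N\gamma_1\beta_2 \omega/B - B)/(B + N\gamma_1)$. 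Rewriting this as $\omega_{I_2} = N\gamma_1(1 + \beta_2\omega/B)/(B + N\gamma_1) - 1$ recovers the stated formula. Differentiating instead with respect to $R_2$ (noting $\partial B/\partial R_2 = 0$) immediately yields $\omega_{R_2} = -\epsilon B/(B + N\gamma_1)$, which is the claim of part (2). The only algebraic manipulation that requires care is using the defining identity itself to eliminate the factor $(N(\beta_1-\gamma_1)/\beta_1 - I_2 - \epsilon R_2)$ in favor of $\omega(B + N\gamma_1)/B$ when tidying up $\omega_{I_2}$.

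For part (3), the formulas from (1) and (2) make the positivity inequalities immediate. Specifically,
\begin{equation*}
\omega_{I_2} + 1 \;=\; \frac{N\gamma_1\bigl(\beta_2 \omega + B\bigr)}{B\,(B + N\gamma_1)} \;>\; 0,
\end{equation*}
since $\omega \geq 0$ and every other factor is strictly positive, and
\begin{equation*}
\omega_{R_2} + 1 \;=\; \frac{(1-\epsilon) B + N\gamma_1}{B + N\gamma_1} \;>\; 0,
\end{equation*}
since $0 \leq \epsilon \leq 1$ and $N\gamma_1 > 0$.

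There is no serious obstacle here: the lemma is computational and every step is forced. The only subtlety worth flagging in the write-up is that $\omega$ is continuous across the switching line $I_2 + \epsilon R_2 = N(1 - 1/\mathscr{R}_1)$ but not differentiable there, which is exactly why the formulas in (1) and (2) use strict inequalities and the derivative is not defined on the line itself.
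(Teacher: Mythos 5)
Your proposal is correct and follows essentially the same route as the paper: verify the derivative formulas in parts (1) and (2) by differentiating the explicit expression for $\omega$ (the paper does this directly, you do it via implicit differentiation of the cleared-denominator identity, which is only a cosmetic difference), and then read off part (3) from those formulas using $\omega \geq 0$ and $0 \leq \epsilon \leq 1$. Your closing remark about non-differentiability across the switching line matches the paper's own observation that the inequalities in the case split must be strict.
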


\begin{proof}
Properties 1. and 2. can be verified by directly differentiating \eqref{omega} and noting that $\omega(I_2,R_2)$ is not differentiable at $I_2 + \epsilon R_2 = N \displaystyle{\left( 1 - \frac{1}{\mathscr{R}_1}\right)}$ so that all inequalities are strict.

We now consider the three parts which compose property 3. We notice that they are trivially true for $\displaystyle{I_2 + \epsilon R_2 > N \displaystyle{\left(1 - \frac{1}{\mathscr{R}_1}\right)}}$, so we only need to consider $I_2 + \epsilon R_2 < N \displaystyle{\left(1 - \frac{1}{\mathscr{R}_1}\right)}$. By properties 1. and 2., we have
\begin{enumerate}
\item[(a)]
$\displaystyle{\frac{\partial \omega}{\partial I_2} + 1 = \displaystyle{N \gamma_1\left( \frac{1 + \displaystyle{\frac{ \beta_2 \omega(I_2,R_2)}{\beta_2 I_2 + N \sigma_1}}}{ \beta_2 I_2 + N (\gamma_1 + \sigma_1)}\right)} > 0},$
\item[(b)]
$\displaystyle{\frac{\partial \omega}{\partial R_2} + 1} = \displaystyle{-\epsilon \left( \frac{\beta_2 I_2 + N \sigma_1}{\beta_2 I_2 + N (\gamma_1 + \sigma_1)} \right)+1} = \frac{(1-\epsilon)(\beta_2 I_2 + N \sigma_1) + N \gamma_1}{\beta_2 I_2 + N (\gamma_1 + \sigma_1)} > 0$,
\end{enumerate}
because $\omega(I_2,R_2) \geq 0$ by \eqref{omega} and $1 - \epsilon \geq 0$ since $0 \leq \epsilon \leq 1$, and we are done.
\end{proof}

We now prove Theorem \ref{theorem3}.

\begin{proof} 

\noindent \emph{Proof of 1:} We consider the properties of the $I_2$- and $R_2$-nullcline in the region int$(\Lambda)$. We have the following:
\begin{eqnarray}
\label{inullcline}
I_2' = 0 & \Longrightarrow & f(I_2,R_2) = \beta_2 - \gamma_2 - \frac{\beta_2}{N} (\omega(I_2,R_2) + I_2 + R_2) = 0\\
\label{rnullcline}
R_2'= 0 & \Longrightarrow & g(I_2,R_2) = \gamma_2 I_2 - \sigma_2 R_2 - \frac{\beta_1}{N} (1-\epsilon) \omega(I_2,R_2) R_2 = 0.
\end{eqnarray}
where $\omega(I_2,R_2)$ is given by \eqref{omega}. Also note that int$(\Lambda)$ we have $I_2 > 0$ so that we have divided by $I_2$ in the $I_2$-nullcline. Due to the nonlinearities in $I_2$ and $R_2$ in \eqref{inullcline} and \eqref{rnullcline}, we will analyze $f(I_2,R_2)$ and $g(I_2,R_2)$ from implicit form.

In order to determine the $R_2$-intercepts, we note that $\mathscr{R}_{21} > 1$ implies $\beta_2 \gamma_1 (\beta_1 + \sigma_1) - \beta_1 \gamma_2 ( \gamma_1 +\sigma_1)> 0$. We now evaluate $f(I_2,R_2)$ and $g(I_2,R_2)$ at $I_2 = 0$. This gives
\[
\begin{aligned}
f(0,R_2) = & \; \beta_2 - \gamma_2 - \frac{\beta_2}{N} (\omega(0,R_2) + R_2) = 0 \\
\Longrightarrow & \; \beta_2 - \gamma_2 - \frac{\beta_2}{N} \left(\frac{\sigma_1\left(N(\beta_1 - \gamma_1) - \epsilon \beta_1 R_2\right)}{\beta_1(\gamma_1 + \sigma_1)} + R_2\right) \\
\Longrightarrow & \; R_2 = \frac{N(\beta_2 \gamma_1 (\beta_1 + \sigma_1) - \beta_1 \gamma_2 ( \gamma_1 +\sigma_1))}{\beta_1 \beta_2((1-\epsilon) \sigma_1 + \gamma_1)} > 0 \\
g(0,R_2) = & \; - \left( \sigma_2 + \frac{\beta_1}{N} (1-\epsilon) \omega(0,R_2)\right) R_2 = 0 \\
\Longrightarrow & \; R_2 = 0
\end{aligned}
\]
where we have utilized $1 - \epsilon \geq 0$ and $\omega(I_2,R_2) \geq 0$. 
It follows that, if $\mathscr{R}_{21} > 1$ is satisfied, then the $I_2$-nullcline has a positive $R_2$-intercept and the $R_2$-nullcline has the $R_2$-intercept $R_2=0$.

We now want to determine how the nullclines change as $I_2$ increases in the region $I_2 > 0$. We consider $R_2 = R_2(I_2)$ as a function of $I_2$ and implicitly differentiate to find $\displaystyle{\frac{dR_2}{dI_2}}$ on the surfaces $f(I_2,R_2)$ and $g(I_2,R_2)$. For the $I_2$-nullcline $f(I_2,R_2)$ \eqref{inullcline}, we have
\[\frac{d}{dI_2} f(I_2,R_2(I_2)) = -\frac{\beta_2}{N} \left( \frac{\partial \omega}{\partial I_2} + \frac{\partial \omega}{\partial R_2} \frac{dR_2}{dI_2} + 1 + \frac{dR_2}{dI_2} \right) = 0.\]
Solving for $\displaystyle{\frac{dR_2}{dI_2}}$ gives
\[
\displaystyle{\frac{dR_2}{dI_2} = - \left( \frac{\displaystyle{\frac{\partial \omega}{\partial I_2} + 1}}{\displaystyle{\frac{\partial \omega}{\partial R_2} + 1}} \right)} < 0
\]
by properties 3(a) and 3(b) of Lemma \ref{lemma3}. 
It follows that the $I_2$-nullcline is strictly decreasing in int$(\Lambda)$. 

For the $R_2$-nullcline $g(I_2,R_2)$ \eqref{rnullcline} we have
\[
\frac{d}{dI} g(I_2,R_2(I_2)) = \gamma_2-\sigma_2 \frac{dR_2}{dI_2} - \frac{\beta_1}{N} (1-\epsilon) \left( \frac{\partial \omega}{\partial I_2} R + \frac{\partial \omega}{\partial R_2} \frac{dR_2}{dI_2} R + \omega(I_2,R_2) \frac{dR_2}{dI_2} \right) = 0.
\]
Solving for $\displaystyle{\frac{dR_2}{dI_2}}$ gives
\begin{equation}
\label{rnull-diff}
\frac{dR_2}{dI_2} = \frac{\displaystyle{-\beta_1 (1- \epsilon) \frac{\partial \omega}{\partial I_2} R_2 + \gamma_2 N}}{\displaystyle{\beta_1 ( 1 - \epsilon) \left( \frac{\partial \omega}{\partial R_2} R_2 + \omega(I_2,R_2) \right) + \sigma_2 N}}
\end{equation}
We now consider cases. If $\epsilon = 0$, then \eqref{rnull-diff} reduces to
\begin{equation}
\label{eqn100}
\frac{dR_2}{dI_2} = \frac{\displaystyle{-\beta_1 \frac{\partial \omega}{\partial I_2} R_2 + \gamma_2 N}}{\displaystyle{\beta_1 \omega(I_2,R_2) + \sigma_2 N}}
\end{equation}
where we have note that $\displaystyle{\frac{\partial \omega}{\partial R_2} = 0}$ if $\epsilon = 0$ from property 2. of Lemma \ref{lemma3}. Since $\omega(I_2,R_2) \geq 0$ we have $\beta_1 \omega(I_2,R_2) + \sigma_2 N > 0$, so we only need to consider the numerator of \eqref{eqn100}. We first note that \eqref{rnullcline} implies that
\begin{equation}
\label{eq14}
\displaystyle{\gamma_2 = \frac{R_2}{I_2} \left( \sigma_2 + \frac{\beta_1}{N}(1-\epsilon)R_2\omega(I_2,R_2)\right).}
\end{equation}
Substituting \eqref{eq14} and the form of $\displaystyle{\frac{\partial \omega}{\partial I_2}}$ from property 1. of Lemma \ref{lemma3} into \eqref{eqn100} gives and factoring by $\omega(I_2,R_2)$ gives
\[\frac{dR_2}{dI_2} = \frac{1}{I_2} \left( \alpha_1 \omega(I_2,R_2) + \alpha_2 \right)\]
where
\[
\begin{aligned}
\alpha_1 & = \displaystyle{\frac{\beta_1 R (\sigma_1(\gamma_1 + \sigma_1)N^2 + 2 I_2 N \beta_2 \sigma_1 + I^2 \beta_2^2)}{ (\beta_2 I_2 + N \sigma_2)(\beta_2 I_2 + N(\gamma_1 + \gamma_1))}} > 0\\
\alpha_2 & = \frac{\sigma_2 (\gamma_1 + \sigma_1)N^2 + (\beta_1 \sigma_1 + \beta_2 \sigma_2) I_2 N + I^2 \beta_1 \beta_2}{\beta_2 I_2 + N(\gamma_1 + \gamma_1)} > 0
\end{aligned}
\]
It follows that, for the $R_2$-nullcline we  have $\displaystyle{\frac{dR_2}{dI_2}} > 0$ if $\epsilon = 0$. For the case $\epsilon = 1$, we have that \eqref{rnull-diff} reduces to
\[\frac{dR_2}{dI_2} = \frac{\gamma_2}{\sigma_2} > 0.\]
It follows that the $R_2$-nullcline is increasing in the region int$(\Lambda)$ if $\epsilon = 0$ or $\epsilon = 1$.

We now consider the directions in the vector field for different regions of the planar state space $(I_2,R_2)$. We show that $I_2' = f(I_2,R_2)$ and $R_2' = g(I_2,R_2)$ strictly decrease as $R_2$ increases. From \eqref{SIR-piecewise}, we have that
\[\frac{\partial}{\partial R_2} f(I_2,R_2) = \frac{\partial}{\partial R_2} \left[ \frac{\beta_2}{N} (N - \omega(I_2,R_2) - I_2 - R_2) I_2 - \gamma_2 I_2 \right] = -\frac{\beta_2}{N}I_2\left(\frac{d\omega}{dR_2} + 1\right)<0\]
by property 3(b) of Lemma \ref{lemma3}. We also have
\[
\begin{aligned}
\frac{\partial}{\partial R_2} g(I_2,R_2) & = \frac{\partial}{\partial R_2} \left[ \gamma_2 I_2 - \sigma_2 R_2 - \frac{\beta_1}{N} (1 - \epsilon) \omega(I_2,R_2) R_2 \right] \\
& = -\sigma_2 - \frac{\beta_1}{N} (1 - \epsilon) \left( \frac{d \omega}{dR} R_2 + \omega(I_2,R_2) \right).\\
& = \left\{ \begin{array}{ll} \displaystyle{-\sigma_2 - \frac{\beta_2}{N} \omega(I_2,R_2)} < 0, & \mbox{if } \epsilon = 0 \\
\displaystyle{-\sigma_2} < 0, & \mbox{if } \epsilon = 1 \end{array} \right.
\end{aligned}
\]
where we have noted that $\displaystyle{\frac{\partial \omega}{\partial R_2} = 0}$ for $\epsilon = 0$ by property of 2. of Lemma \ref{lemma3}. It follows that, if $\epsilon = 0$ or $\epsilon =1$, we have that $I_2' < 0$ and $R_2' < 0$ if we are above both the $I_2$- and $R_2$-nullcline. The rest of the cases follow similarly. \\

\noindent \emph{Proof of 2:} Since the $I_2$-nullcline $f(I_2,R_2) = 0$ is strictly decreasing from a positive $R_2$-intercept, the $R_2$-nullcline $g(I_2,R_2) = 0$ is strictly increasing from an $R_2$-intercept of $R_2=0$, and solutions are restricted to $\Lambda$ by Theorem \ref{theorem1}, we have that the nullclines must have a unique intersection in int$(\Lambda)$. Local exponential stability follows from Lemma \ref{lem:linearstability} in Appendix \ref{app:d}.

For global stability, we utilize the Poincar\'{e}-Bendixson Theorem \cite{wiggins2003}. Since the system is planar, it may not contain chaotic trajectories \cite{wiggins2003}. The only remaining possibilities are convergence to the unique steady state or convergence to a limit cycle. We will rule out the existence a limit cycle using the modified Dulac's Criterion (Lemma \ref{dulac} in Appendix \ref{app:b}).

Consider the piecewise-defined vector field \eqref{SIR-piecewise} and the function $\displaystyle{\psi(I_2,R_2) = \frac{1}{I_2}}$, which is continuous on int$(\Lambda)$. We notice that \eqref{SIR-piecewise} is continuous on int$(\Lambda)$ and continuously differentiable everywhere on $\Lambda$ except $\displaystyle{I_2 + \epsilon R_2 = N \left( 1 - \frac{1}{\mathscr{R}_1} \right)}$.

We have that
\[\begin{aligned}
& \frac{\partial}{\partial I_2} [ \psi(I_2,R_2) f(I_2,R_2) ] + \frac{\partial}{\partial R_2} [ \psi(I_2,R_2) g(I_2,R_2) ] \\
& = \frac{\partial}{\partial I_2} \left[ \frac{\beta_2}{N} (N - \omega(I_2,R_2) - I_2 - R_2) - \gamma_2 \right] + \frac{\partial}{\partial R_2} \left[ \gamma_2 - \left( \frac{N\sigma_2 + \beta_1 (1 -\epsilon) \omega(I_2,R_2)}{NI_2} \right) R_2 \right]\\
& = -\frac{\beta_2}{N} \left( \frac{\partial \omega}{\partial I_2} + 1 \right) - \frac{\sigma_2}{I_2} - \frac{\beta_1 (1 - \epsilon)}{NI_2} \left( \frac{\partial \omega}{\partial R_2} R_2 + \omega(I_2,R_2) \right) \\
& = \left\{ \begin{array}{ll} \displaystyle{-\frac{\beta_2}{N} \left( \frac{\partial \omega}{\partial I_2} + 1 \right) - \frac{\sigma_2}{I_2} - \frac{\beta_1}{NI_2} \omega(I_2,R_2)} < 0, & \mbox{if } \epsilon = 0 \\
\displaystyle{-\frac{\beta_2}{N} \left( \frac{\partial \omega}{\partial I_2} + 1 \right) - \frac{\sigma_2}{I_2}} < 0, & \mbox{if } \epsilon = 1 \end{array} \right.
\end{aligned} \]
It follows by Lemma \ref{dulac} in Appendix \eqref{app:d} that we cannot have a periodic orbit lying entirely within $\Lambda$. Consequently, from Poincar\'{e}-Bendixson Theorem $(\bar{I}_2,\bar{R}_2)$ is the global attractor for trajectories starting in int$(\Lambda)$.

We now establish the location of the unique steady state $(\bar{I}_2,\bar{R}_2) \in \mbox{int}(\Lambda)$. We note that, since the $I_2$-nullcline is above the $R_2$-nullcline at $I_2=0$, for them to intersect in $\displaystyle{0 < I_2 + \epsilon R_2 < N \left( 1 - \frac{1}{\mathscr{R}_2} \right)}$ it is sufficient for the $R_2$-nullcline to be above the $I_2$-nullcline along $\beta_2 I_2 + \beta_2 \epsilon R = N \left( \beta_2 - \gamma_2 \right)$. Note that we have $\omega(I_2,R_2) = 0$ along this set. It consequently remains to consider the equations $f(I_2,R_2) = 0$ and $g(I_2,R_2) = 0$ along this set.

For the intersection of $f(I_2,R_2) = 0$ \eqref{inullcline} and $\beta_2 I_2 + \beta_2 \epsilon R = N \left( \beta_2 - \gamma_2 \right)$, we solve the following system of equations
\[\left\{
\begin{array}{rlrll}
\beta_1 I_2 & + & \beta_1 \epsilon R_2 & = & N(\beta_1 - \gamma_1)\\
\beta_2 I_2 & + & \beta_2 R_2 & = & N(\beta_2 - \gamma_2)
\end{array}
\right.\]
to get
\[R_f = \frac{N( \beta_2 \gamma_1-\beta_1 \gamma_2)}{\beta_2 \beta_1(1-\epsilon)}.\]
For the intersection of $g(I_2,R_2) = 0$ \eqref{rnullcline} and $\beta_2 I_2 + \beta_2 \epsilon R = N \left( \beta_2 - \gamma_2 \right)$, we solve the following system of equations
\[\left\{
\begin{array}{rlrll}
\beta_1 I_2 & + & \beta_1 \epsilon R_2 & = & N(\beta_1 - \gamma_1)\\
\gamma_2 I_2 & - & \sigma_2 R_2 & = & 0
\end{array}
\right.\]
to get
\[R_g = \frac{N \gamma_2(\beta_1 - \gamma_1)}{\beta_1(\sigma_2 + \epsilon \gamma_2)}.\]
In order to have $R_g > R_f$ we must have
\small
\[
\beta_2\gamma_2(1-\epsilon)(\beta_1 - \gamma_1)  > (\sigma_2 + \epsilon \gamma_2)( \beta_2 \gamma_1-\beta_1 \gamma_2) \Longrightarrow \frac{\beta_1}{\gamma_1} > \frac{\beta_2}{\gamma_2} \left( \frac{\gamma_2 + \sigma_2}{(1-\epsilon) \beta_2 + \epsilon \gamma_2 + \sigma_2} \right)\Longrightarrow \frac{\mathscr{R}_1}{\mathscr{R}_2} \left( \frac{(1-\epsilon) \beta_2 + \epsilon \gamma_2 + \sigma_2}{\gamma_2 + \sigma_2} \right) > 1.
\]
\normalsize
It follows that if $\mathscr{R}_{12} > 1$, then the steady state $(\bar{I}_2,\bar{R}_2)$ satisfies $\displaystyle{0 < \bar{I}_2 + \epsilon \bar{R}_2 < N \left( 1 - \frac{1}{\mathscr{R}_1} \right)}$ which is sufficient to guarantee $\omega(\bar{I}_2,\bar{R}_2)> 0$ by \eqref{omega}. If $\mathscr{R}_{12} < 1$, however, the intersection will satisfy $\displaystyle{\bar{I}_2 + \epsilon \bar{R}_2 > N \left( 1 - \frac{1}{\mathscr{R}_1} \right)}$ so that $\omega(\bar{I}_2,\bar{R}_2) = 0$ by \eqref{omega}, and we are done.

\end{proof}

\section{Supporting results}
\label{app:d}

\begin{lem}
\label{lem:linearstability}
Consider the nonlinear planar system
\begin{equation}
\label{nonlinear}
\left\{
\begin{aligned}
\frac{dx}{dt} & = f(x,y) \\
\frac{dy}{dt} & = g(x,y).
\end{aligned}
\right.
\end{equation}
with a hyperbolic fixed point $(\bar{x},\bar{y})$. Suppose that, at the fixed point $(\bar{x},\bar{y})$, the $x$-nullcline is decreasing, the $y$-nullcline is increasing, and the region above $(\bar{x},\bar{y})$ satisfies $x'<0$ and $y'<0$. Then $(\bar{x},\bar{y})$ is locally exponentially stable.
\end{lem}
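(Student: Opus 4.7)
The plan is to apply the principle of linearized stability: at a hyperbolic fixed point of a smooth planar system, local exponential stability follows from showing that both eigenvalues of the Jacobian
\[
J = \begin{pmatrix} f_x & f_y \\ g_x & g_y \end{pmatrix}
\]
evaluated at $(\bar{x},\bar{y})$ have strictly negative real parts, and for a $2 \times 2$ real matrix this is equivalent to the two scalar inequalities $\mathrm{tr}(J) < 0$ and $\det(J) > 0$ \cite{wiggins2003}. All three geometric hypotheses in the statement will be used to pin down the signs of the four partial derivatives.

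First I would convert the nullcline slope hypotheses into sign information via the implicit function theorem. Implicit differentiation along the $x$-nullcline $f(x,y)=0$ gives $dy/dx = -f_x/f_y$, and along the $y$-nullcline $g(x,y)=0$ gives $dy/dx = -g_x/g_y$. The hypothesis that the $x$-nullcline is strictly decreasing and the $y$-nullcline is strictly increasing at $(\bar{x},\bar{y})$ forces $f_y, g_y \ne 0$ and tells us that $f_x$ and $f_y$ share a common sign while $g_x$ and $g_y$ have opposite signs. Next, the directional hypothesis $x' < 0$ and $y' < 0$ in the region above $(\bar{x},\bar{y})$ is evaluated along the vertical segment $x = \bar{x}$, $y = \bar{y} + h$ for small $h > 0$; a first-order Taylor expansion of $f$ and $g$ about $(\bar{x},\bar{y})$ gives $f_y \le 0$ and $g_y \le 0$, and nonvanishing then forces the strict inequalities $f_y < 0$ and $g_y < 0$. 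The slope relations established above now pin down $f_x < 0$ (same sign as $f_y$) and $g_x > 0$ (opposite sign from $g_y$).

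Plugging these signs into $J$ yields $\mathrm{tr}(J) = f_x + g_y < 0$ immediately, and
\[
\det(J) = f_x g_y - f_y g_x = (-)(-) - (-)(+) > 0.
\]
The eigenvalues of $J$ are the roots of $\lambda^2 - \mathrm{tr}(J)\lambda + \det(J) = 0$, which has negative sum and positive product of roots, so both roots lie in the open left half-plane. Local exponential stability of $(\bar{x},\bar{y})$ then follows from the classical principle of linearized stability \cite{hartman1960,grobman1959,wiggins2003}.

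I do not anticipate a serious obstacle. The only mild subtlety is upgrading the weak inequalities $f_y \le 0$, $g_y \le 0$ supplied by the directional hypothesis to strict inequalities; this is forced by the strict monotonicity assumed of the nullclines (which prevents vertical tangents and hence forbids $f_y = 0$ or $g_y = 0$), and is also consistent with the stated hyperbolicity of the fixed point.
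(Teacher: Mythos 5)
Your proof is correct and takes essentially the same approach as the paper: both arguments convert the nullcline-slope and directional hypotheses into the sign pattern $\left[\begin{smallmatrix} - & - \\ + & - \end{smallmatrix}\right]$ for the linearization and conclude via $\mathrm{Tr}<0$, $\mathrm{Det}>0$. The only cosmetic difference is that you read the signs directly off the Jacobian of the nonlinear system via implicit differentiation, whereas the paper first invokes Hartman--Grobman and argues on a generic linear system; your route is, if anything, slightly more direct.
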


\begin{proof}
Since $(\bar{x},\bar{y})$ is a hyperbolic fixed point by assumption, the Hartman-Grobman Theorem \cite{grobman1959,hartman1960} guarantees the existence of a neighborhood around $(\bar{x},\bar{y})$ where trajectories are mapped to a corresponding linear system with coefficient matrix given by the Jacobian of the nonlinear system evaluated at the fixed point. We furthermore have tangency of the nonlinear and nonlinear system nullclines at the fixed point and that $x'<0$ and $y'<0$ above the nullclines in the linear system. We now prove stability in the linearized system. 

Consider the general linear system of ordinary differential equations in two variables:
\begin{equation}
\label{linear}
\left\{
\begin{aligned}
\frac{dx}{dt} & = ax + by \\
\frac{dy}{dt} & = cx + dy.
\end{aligned}
\right.
\end{equation}
In order to have $x'<0$ above the $x$-nullcline, we require that:
\[x'<0 \Longrightarrow ax+by < 0 \Longrightarrow by < -ax \Longrightarrow y > -\frac{a}{b}x.\]
This can only happen if $b < 0$. A similar argument for the $y$-nullcline gives that $d < 0$.

We now consider conditions 1 and 2. The $x$-nullcline is given by:
\[x'=0 \Longrightarrow ax+by = 0 \Longrightarrow y = - \frac{a}{b} x.\]
In order for this to have a negative slope, we require that $-\frac{a}{b}<0$. Given that $b < 0$, we must also have $a < 0$. Similarly, for the $y$-nullcline, we have:
\[y'=0 \Longrightarrow cx+dy = 0 \Longrightarrow y = -\frac{c}{d} x.\]
For a positive slope, we require $-\frac{c}{d} > 0$ which, given that $d < 0$, implies that $d > 0$.

It follows that the coefficient matrix for the system has the indicated sign pattern
\[A = \left[ \begin{array}{cc} a & b \\ c & d \end{array} \right] = \left[ \begin{array}{cc} - & - \\ + & - \end{array} \right].\]
It immediately follows that:
\[\mbox{Tr}(A) = a + d = (-) + (-) < 0\]
and 
\[\mbox{Det}(A) = ad - bc = (-)(-) - (-)(+) > 0.\]
It follows from classical dynamical systems theory that the steady state $(0,0)$ of the linear system \eqref{linear} is exponentially stable. Local exponential stability of $(\bar{x},\bar{y})$ for \eqref{nonlinear} follows, and we are done.
\end{proof}

\begin{lem}[Modified Dulac's Criterion]
\label{dulac}
Consider the nonlinear planar system \eqref{nonlinear} 
and let $\Lambda$ be a simply connected region in this plane. Consider a connected curve $\Gamma$ lying within $\Lambda$ which divides $\Lambda$ into two regions, $\Lambda_1$ and $\Lambda_2$, (i.e. $\Lambda_1 \cup \Lambda_2 = \Lambda$ and $\Lambda_1 \cap \Lambda_2 = \Gamma$) both of which are simply connected. Suppose that $f$ and $g$ are continuous on $\Lambda$ and continuously differentiable in the interiors of $\Lambda_1$ and $\Lambda_2$ (but potentially not continuously differentiable along $\Gamma$).

Suppose there is a continuously differentiable function $\psi(x,y)$ such that 
\[\frac{\partial}{\partial x} [ \psi(x,y) f(x,y) ] + \frac{\partial}{\partial y} [ \psi(x,y) g(x,y) ]\]
has a constant sign in the interior of $\Lambda_1$ and $\Lambda_2$. Then there are no nonconstant periodic orbits lying entirely in $\Lambda$.
\end{lem}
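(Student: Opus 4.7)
The plan is to prove the result by contradiction, adapting the classical proof of Dulac's criterion via Green's theorem, but splitting the region enclosed by the hypothesized periodic orbit along $\Gamma$ and exploiting the continuity of $\psi f$ and $\psi g$ across $\Gamma$ to cancel the boundary contributions there.

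First I would suppose for contradiction that there is a nonconstant periodic orbit $\mathcal{C} \subset \Lambda$ with period $T$, parameterized by the flow so that $\dot{x} = f(x,y)$ and $\dot{y} = g(x,y)$. By the Jordan curve theorem, $\mathcal{C}$ bounds an interior region $D$, and simple-connectedness of $\Lambda$ ensures $\overline{D} \subset \Lambda$. I would decompose $D$ into $D_1 = D \cap \Lambda_1$ and $D_2 = D \cap \Lambda_2$, so that their shared boundary lies on $\Gamma \cap D$. In the trivial case $\mathcal{C} \cap \Gamma = \emptyset$ the enclosed region lies entirely in one $\Lambda_i$ and the standard Dulac argument applies, so I would focus on the case where $\mathcal{C}$ crosses $\Gamma$.

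Next I would apply Green's theorem with $P = -\psi g$ and $Q = \psi f$ on each $D_i$, where $\psi f, \psi g$ are $C^1$, to obtain
\[\oint_{\partial D_i} (-\psi g)\,dx + (\psi f)\,dy = \iint_{D_i} \left[\frac{\partial(\psi f)}{\partial x} + \frac{\partial(\psi g)}{\partial y}\right] dA.\]
Summing over $i=1,2$, the boundary of each $D_i$ splits into arcs of $\mathcal{C}$ and arcs of $\Gamma \cap D$; the $\Gamma$-arcs appear once for each region, traversed in opposite orientations. Because $\psi$, $f$, and $g$ are continuous across $\Gamma$, the integrands $-\psi g$ and $\psi f$ match along $\Gamma$ from both sides and the $\Gamma$-contributions cancel. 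This leaves
\[\oint_{\mathcal{C}} (-\psi g)\,dx + (\psi f)\,dy = \iint_{D} \left[\frac{\partial(\psi f)}{\partial x} + \frac{\partial(\psi g)}{\partial y}\right] dA,\]
where the integrand on the right is defined on $D \setminus \Gamma$, a set of full measure. Parameterizing $\mathcal{C}$ by $t$ so that $dx = f\,dt$ and $dy = g\,dt$ makes the line integral equal $\int_0^T \psi(x(t),y(t))(-gf + fg)\,dt = 0$, while the area integral is strictly nonzero by the constant-sign hypothesis on the interior of each $\Lambda_i$, yielding the desired contradiction.

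The main obstacle will be confirming that Green's theorem applies legitimately to $D_1$ and $D_2$: this requires $\mathcal{C}$ and the arcs $\Gamma \cap D$ to be sufficiently regular (rectifiable, or piecewise smooth). The orbit $\mathcal{C}$ is smooth since it is a trajectory of a locally $C^1$ vector field away from $\Gamma$, and in applications $\Gamma$ is a smooth curve (in the present paper it is the line $\displaystyle{I_2 + \epsilon R_2 = N(1 - 1/\mathscr{R}_1)}$), so these conditions are met. A secondary issue is that $\mathcal{C}$ may cross $\Gamma$ multiple times, causing $D_1$ and $D_2$ to have several connected components; this does not break the argument because Green's theorem can be applied componentwise and the cancellation along each $\Gamma$-arc is local. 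With these regularity points addressed, the argument rules out nonconstant periodic orbits as claimed.
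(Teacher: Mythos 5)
Your proposal is correct and follows essentially the same route as the paper's proof: both argue by contradiction, split the region enclosed by the hypothesized periodic orbit along $\Gamma$, apply Green's theorem on each piece with $P = -\psi g$ and $Q = \psi f$, cancel the shared $\Gamma$-boundary contributions (traversed in opposite orientations) using continuity of $\psi f$ and $\psi g$ across $\Gamma$, and conclude that the vanishing line integral along the orbit contradicts the sign-definite area integral. The only cosmetic difference is that the paper formalizes multiple crossings of $\Gamma$ by induction on pairs of crossings, whereas you handle them componentwise, which amounts to the same thing.
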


\begin{proof}
Without loss of generality, we assume that 
\[\frac{\partial}{\partial x} [ \psi(x,y) f(x,y) ] + \frac{\partial}{\partial y} [ \psi(x,y) g(x,y) ] > 0\]
in the interior of $\Lambda_1$ and $\Lambda_2$. Suppose there is a periodic orbit $C$ enclosing a bounded region $D$. Without loss of generality, we assume that $C$ is counter-clockwise oriented around $D$.

The Dulac criterion guarantees that $C$ does not lie entirely in $\Lambda_1$ or $\Lambda_2$ so we need only consider the case where $C$ crosses between $\Lambda_1$ and $\Lambda_2$. We note that $C$ may cross $\Gamma$ an arbitrary number of even times; however, each time it crosses and crosses back, we may generically divide the existing region into two subregions, both of which are simply connected. Consequently, it is sufficient to consider the first pair of crossings and induct to any subsequent pairs of crossings.


Suppose $C$ crosses $\Gamma$ and then crosses back. We define $D_1$ and $D_2$ to be the two open subregions of $D$ created by this pair of crossings such that $\overline{D_1} \cup \overline{D_2} = D$ and $\overline{D_1} \cap \overline{D_2} \subseteq \gamma$. We define $C'$ to be a curve traversing the boundary between $D_1$ and $D_2$, i.e. $C' = \overline{D_1} \cap \overline{D_2}$. This curve can be oriented in either direction. We define $-C'$ to the curve oriented in the other direction. We now define $C_1$ and $C_2$ to be the two counter-clockwise oriented curves which traverse the boundary of $D_1$ and $D_2$ respectively. Note that these curves contain part of the curve $C$ and one of either $C'$ or $-C'$.


Now, since $f(x,y)$ and $g(x,y)$ are continuously differentiable within $D_1$ and $D_2$ we have that
\[\iint_{D_i} \frac{\partial}{\partial x} [ \psi(x,y) f(x,y) ] + \frac{\partial}{\partial y} [ \psi(x,y) g(x,y) ] \; dy dx > 0\]
for $i = 1, 2$. By Green's Theorem, it follows that we have
\[
\begin{aligned}
0 & < \sum_{i=1}^2 \iint_{D_i} \frac{\partial}{\partial x} [ \psi(x,y) f(x,y) ] + \frac{\partial}{\partial y} [ \psi(x,y) g(x,y) ] \; dy dx  \\
& = \sum_{i=1}^2 \oint_{C_i} -\psi(x,y) g(x,y) \; dx + \psi(x,y) f(x,y) \; dy \\
& = \oint_C - \psi(x,y) g(x,y) \; dx + \psi(x,y) f(x,y) \; dy + \int_{C'} -\psi(x,y) g(x,y) \; dx + \psi(x,y) f(x,y) \; dy\\ & \hspace{1in} + \int_{-C'} -\psi(x,y) g(x,y) \; dx + \psi(x,y) f(x,y) \; dy \\
& = \oint_C - \psi(x,y) g(x,y) \; dx + \psi(x,y) f(x,y) \; dy \\
& = \oint_C - \psi(x,y) \frac{dy}{dt} \; dx + \psi(x,y) \frac{dx}{dt} \; dy = 0
\end{aligned}
\]
This is a contradiction, so that we cannot have a periodic orbit which crosses $\Gamma$ and then crosses back. We may now induct to multiple crossings by noting that each subsequent crossing will split one of the regions already considered in exactly the same way as the first split. In particular, it will be produce two simple connected subregions and the boundary between the split subregions may be parametrized by a connected curve. Since the periodic orbit $C$ was chosen arbitrarily in $\Lambda$, we are done.
\end{proof}

\end{document}